\documentclass[12pt]{amsart}

\usepackage[margin=1in]{geometry}
\usepackage{amsmath,amssymb,amsthm,mathtools}
\usepackage{microtype}
\usepackage{enumitem}
\numberwithin{equation}{section}
\usepackage[colorlinks=true,linkcolor=blue,citecolor=blue,urlcolor=blue]{hyperref}

\newtheorem{theorem}{Theorem}[section]
\newtheorem{proposition}[theorem]{Proposition}
\newtheorem{lemma}[theorem]{Lemma}
\newtheorem{corollary}[theorem]{Corollary}
\theoremstyle{definition}
\newtheorem{definition}[theorem]{Definition}
\newtheorem{remark}[theorem]{Remark}

\newcommand{\Sym}{\operatorname{Sym}}
\newcommand{\tr}{\operatorname{tr}}
\newcommand{\Id}{\mathrm{Id}}
\newcommand{\R}{\mathbb{R}}
\newcommand{\Refl}{\mathcal{R}}
\newcommand{\Gplus}{\Gamma_{+}}
\newcommand{\ka}{\underline{k}}

\newcommand{\inner}[2]{\left\langle #1,#2\right\rangle}
\newcommand{\norm}[1]{\left\lvert #1\right\rvert}

\title[Finite-Shift Inverse-Concavity and Exterior Noncollapsing]{Finite-Shift Inverse-Concavity and Exterior Noncollapsing for Fully Nonlinear Curvature Flows of Nonconvex Hypersurfaces}

\author{Weimin Sheng}
\address{Weimin Sheng: School of Mathematical Sciences, Zhejiang University, Hangzhou 310058, China.}
\email{weimins@zju.edu.cn}
\author{Ye Zhu}
\address{Ye Zhu: School of Mathematical Sciences, Zhejiang University, Hangzhou 310058, China.}
\email{ye.zhu@zju.edu.cn}

\subjclass[2020]{53C21 (primary); 35K55, 58J35 (secondary).}
\keywords{Finite-shift inverse-concavity, exterior noncollapsing, fully nonlinear curvature flows, two-point maximum principle.}
\date{}

\begin{document}
\maketitle

\begin{abstract}
We give a finite negative-shift criterion for preserving a prescribed
exterior noncollapsing bound under fully nonlinear curvature flows whose
hypersurfaces need not be convex.  The criterion uses the optimized
two-point expression of Andrews and Langford.  We verify it by a uniform
global two-matrix estimate for the explicit speed
\[
 F_{\varepsilon}(A)=\tr A-
 \varepsilon\frac{\norm{A^{\circ}}^{2}}{\tr A}
\]
on the open cone
\[
 \Gamma=\{A\in\Sym(n):\tr A>0, \norm{A^{\circ}}<\tr A\}.
\]
Here $A^\circ$ denotes the trace-free part of $A$.
This cone contains the positive cone and also matrices with a negative
eigenvalue, while the displayed speed is concave and non-convex.
For an explicit range of parameters depending on the initial exterior
bound, the cone and that bound are preserved throughout the smooth
embedded existence interval.  Together with the known interior estimate
for concave speeds, this gives two-sided tangent-ball bounds for this
nonlinear family, including closed nonconvex initial hypersurfaces.
\end{abstract}

\section{Introduction}\label{sec:introduction}

Let $M^n$ be a closed connected manifold, $n\geq2$, and let
$X:M\times[0,T)\to\R^{n+1}$ be a smooth family of embeddings with
outward unit normal $\nu$.  Write $g$, $h$ and $A=g^{-1}h$ for the
induced metric, second fundamental form and Weingarten map, with the
sign convention
\[
 h(u,v)=\inner{\overline\nabla_u\nu}{dX(v)}=g(Au,v).
\]
Thus round spheres have positive principal curvatures.  We denote the
mean curvature by $H$ and the principal curvatures by
$\kappa_1\leq\cdots\leq\kappa_n$.

Let $\Sym(n)$ be the space of real symmetric $n\times n$ matrices,
equipped with the Frobenius inner product and norm.  Let
$F:\Gamma\to(0,\infty)$ be a smooth orthogonally invariant function
on an open orthogonally invariant cone $\Gamma\subset\Sym(n)$.
We evaluate $F$ on the component matrix of $h$ in a $g$-orthonormal
frame, which is also the matrix of $A$ in that frame.  The identity
$F(OAO^{\mathsf T})=F(A)$ for $O\in O(n)$ makes the resulting scalar
speed independent of the frame.  We write it as $F(A)$, or simply $F$,
and consider the curvature flow
\begin{equation}\label{eq:flow}
 \partial_tX=-F(A)\nu.
\end{equation}
Equivalently, $F(A)=f(\kappa_1,\ldots,\kappa_n)$ for the symmetric
function $f(\kappa)=F(\operatorname{diag}\kappa)$.
Throughout, $F$ is homogeneous of degree one and strictly elliptic:
$F(cA)=cF(A)$ for $c>0$, and its derivative matrix $\dot F^{ij}(A)$
is positive definite.  Here
\[
 \dot F^{ij}(A)B_{ij}
 =\left.\frac{d}{ds}\right|_{s=0}F(A+sB),
\]
with repeated indices summed.  A solution is $\Gamma$-admissible if
$A(x,t)\in\Gamma$ everywhere.  The positive definite cone is denoted
by $\Gplus$.

Sheng and Wang~\cite{SW09} introduced a noncollapsing condition for
mean-convex mean curvature flow and proved that the evolving hypersurfaces
admit interior tangent balls of radius bounded below by a fixed positive
multiple of $H^{-1}$.  Andrews~\cite{A12} subsequently gave a direct
two-point maximum principle proof.  His argument preserves both interior
and exterior noncollapsing bounds.

Andrews, Langford and McCoy~\cite{ALM13} extended this approach to fully
nonlinear curvature flows, obtaining interior noncollapsing for concave
speeds and exterior noncollapsing for convex speeds.  Corresponding
estimates in space forms were developed by Andrews, Han, Li and
Wei~\cite{AHLW15}.  Andrews and Langford~\cite{AL16} then weakened
convexity of the speed to inverse-concavity for the exterior estimate,
at the cost of requiring the hypersurfaces to be strictly convex.
For concave, inverse-concave speeds, the two estimates therefore give
two-sided noncollapsing on the positive curvature cone.

Brendle~\cite{B15} obtained sharp noncollapsing estimates near singularities
of mean curvature flow.  Working with the two-sided tangent-ball condition,
Haslhofer and Kleiner~\cite[Theorems~1.8--1.11]{HK17} established local
curvature and convexity estimates, a global convergence theorem, and a
structure theorem for ancient mean curvature flows.  Such estimates also
enter the construction of mean curvature flow with surgery for mean-convex
surfaces in $\R^3$ by Brendle and Huisken~\cite{BH16}.  The underlying
two-point maximum principle has applications beyond evolution equations,
including Brendle's proof of the Lawson conjecture~\cite{B13}.

The discussion preceding Theorem~1.3 of Langford and Lynch~\cite{LL20}
highlights a gap in the results available there.  They observed that those
two-sided noncollapsing results for nonconvex hypersurfaces did not extend
beyond mean curvature flow.  Concavity alone cannot be expected to supply
the missing exterior bound, since some concave speeds fail even to
preserve convexity.  This distinction motivates a condition that
gives exterior control while allowing both a nonconvex speed and negative
principal curvatures.

We introduce such a condition, called finite-shift inverse-concavity.
Under an initial exterior bound and a lower principal-curvature bound
on the admissible cone, it preserves the prescribed exterior estimate.
The main contribution is a uniform global verification of this condition
for the explicit concave, non-convex speed $F_\varepsilon$ in
\eqref{eq:F-explicit} on a cone allowing negative principal curvatures.
We also prove that this cone is preserved and construct closed nonconvex
initial hypersurfaces satisfying the hypotheses.  Given such initial
data with an exterior bound at scale $H$, the parameter $\varepsilon$
can be chosen sufficiently small, depending on that bound.
Combining the exterior estimate with the interior estimate of~\cite{ALM13}
then gives the two-sided tangent-ball bound in
Corollary~\ref{cor:two-sided}.  This conclusion concerns the smooth
embedded flow; no regularity or surgery theory for $F_\varepsilon$ is
asserted here.

We now state the main results.  For $x\ne y$, set
\begin{equation}\label{eq:kxy}
 k(x,y,t)=
 \frac{2\inner{X(x,t)-X(y,t)}{\nu(x,t)}}
 {\norm{X(x,t)-X(y,t)}^2},
 \qquad
 \ka(x,t)=\inf_{y\ne x}k(x,y,t).
\end{equation}
The quantity $\ka$ is the exterior ball curvature.  We call a flow
exterior noncollapsing at scale $F$ if $\ka\geq-aF$ for some fixed
$a>0$; see \cite{AL16,ALM13} for the tangent-ball interpretation.

The structural condition is a comparison between two curvature matrices.
Matrix products are
ordinary products, $\Id$ is the identity matrix, and $P>0$ means that $P$
is positive definite.

\begin{definition}[finite-shift inverse-concavity]\label{def:shift}
Let $b\geq0$.  A positive homogeneous elliptic speed $F$ satisfies
$\mathcal S_b$ on $\Gamma$ if, for every $A,B\in\Gamma$ and every
$z\in\R$ such that
\[
z\geq-bF(A),\qquad P:=A-z\Id>0,\qquad Q:=B-z\Id>0,
\]
one has
\begin{equation}\label{eq:Ez-def}
 \mathcal E_z(A,B):=
 F(B)-F(A)+\dot F^{ij}(A)(PQ^{-1}P-P)_{ij}\geq0.
\end{equation}
\end{definition}

The expression in \eqref{eq:Ez-def} comes from the optimized two-point
argument of Andrews and Langford~\cite[equation~(2.6) and
Proposition~2.3]{AL16}.  The condition $\mathcal S_b$ requires this
comparison to hold also at negative shifts of depth controlled by $F(A)$.
Our use of the expression combines these shifts with a cone bound that
excludes diagonal contact.  We compare this condition with convexity and
ordinary inverse-concavity after establishing the required matrix
identities in Section~\ref{sec:preliminaries}.

The constant $a$ in the next theorem is the exterior bound to be
preserved, while $b>a$ provides room in the structural shift range.

\begin{theorem}[finite-shift exterior noncollapsing]\label{thm:general}
Let $0<a<b$.  Suppose that $F$ satisfies $\mathcal S_b$ on $\Gamma$ and
that
\begin{equation}\label{eq:cone-gap-general}
 \lambda_{\min}(A)+aF(A)\geq0
 \qquad\text{for every }A\in\Gamma.
\end{equation}
If a compact embedded $\Gamma$-admissible solution of \eqref{eq:flow}
satisfies
\begin{equation}\label{eq:initial-nc}
 \ka(x,0)\geq-aF(x,0)\qquad\text{for every }x\in M,
\end{equation}
then
\begin{equation}\label{eq:preserved-nc}
 \ka(x,t)\geq-aF(x,t)
 \qquad\text{for every }(x,t)\in M\times[0,T).
\end{equation}
\end{theorem}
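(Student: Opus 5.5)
We run the two-point maximum principle argument of Andrews and Langford~\cite{AL16} on the function
\[
 Z(x,y,t)=k(x,y,t)+aF(x,t),\qquad x\ne y,
\]
and show that $\inf Z\geq0$ is preserved. Fix $\delta>0$ and $T'<T$, and suppose that $Z_\delta:=Z+\delta e^{t}$ first vanishes at a time $t_0\in(0,T']$. Since $Z_\delta>0$ at $t=0$ by \eqref{eq:initial-nc}, such a time exists if the conclusion fails.

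The first step is to exclude the diagonal. As $y\to x$ along a direction $e$, we have $k(x,y,t)\to h(e,e)$. Hence $\liminf_{y\to x}Z\geq\lambda_{\min}(A)+aF(A)\geq0$ by \eqref{eq:cone-gap-general}, and so $Z_\delta\geq\delta e^{t}>0$ near the diagonal. This is where the cone gap hypothesis is used. Compactness of $M\times M$ minus a neighbourhood of the diagonal then gives an interior minimum point $(x_0,y_0,t_0)$ with $Z_\delta=0$, $x_0\ne y_0$. We set $z:=k(x_0,y_0,t_0)=-aF(A)-\delta e^{t_0}$, where $A=A(x_0,t_0)$ and $B=A(y_0,t_0)$.

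Next we check the hypotheses of $\mathcal S_b$. Since $\delta$ is small and $a<b$, we have $z\geq-bF(A)$; this is the reason for the slack $b>a$. At an interior minimum in $y$, the standard computation of \cite{A12,ALM13} shows that the sphere of curvature $z$ touching at $x_0$ through $y_0$ lies on one side of the hypersurface near $y_0$. Moreover, the normal $\nu(y_0)$ is the reflection of $\nu(x_0)$ in the hyperplane orthogonal to $X(x_0)-X(y_0)$. Minimality in $x$, via the first derivative and a second-order comparison, gives $A-z\Id\geq0$. The second-order condition in $y$ gives $B-z\Id\geq0$ in the frame obtained by parallel-transporting with the reflection. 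In fact $A-z\Id=A+aF+\delta e^{t_0}\geq\delta e^{t_0}\Id>0$ by \eqref{eq:cone-gap-general}, so $P>0$ is strict. We also need $Q>0$. If $B-z\Id$ only has a kernel, we perturb $z$ slightly or replace the minimum by $Z_\delta$ with a small further correction. I expect this to go through because the two-point function at $y$ controls $B-z\Id$ from below by a positive term coming from the $\delta e^t$ slack.

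The main step, and the main obstacle, is the time derivative. Following \cite[Proposition~2.3]{AL16}, we compute $(\partial_t-\mathcal L)Z$ at the minimum, with $\mathcal L$ the mixed second-order operator $\dot F^{ij}(A)(\nabla_{x_i}+\nabla_{y_j})\ldots$ and the cross terms optimized by the matrix choice that produces $PQ^{-1}P$. This gives
\[
 0\geq(\partial_t-\mathcal L)Z_\delta\geq\frac{2\inner{X(x_0)-X(y_0)}{\nu(x_0)}}{\cdots}\ \text{terms}\;=\;c\,\mathcal E_z(A,B)+\text{(gradient terms vanishing at the minimum)}+\delta e^{t_0},
\]
for some $c\geq0$. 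We also use $(\partial_t-\dot F^{ij}\nabla_i\nabla_j)F=\dot F^{ij}h_{ik}h^k_jF$, and the corresponding terms combine with those from $k$ through the identity $z=-aF-\delta e^{t_0}$. The delicate point is bookkeeping: the reaction terms from $aF$ have to cancel against the $|A|^2$-type terms generated by $k$, leaving exactly $F(B)-F(A)+\dot F^{ij}(PQ^{-1}P-P)_{ij}$. Here the homogeneity identity $\dot F^{ij}A_{ij}=F$ is used to absorb the shift $z$. Once this is in place, $\mathcal S_b$ gives $\mathcal E_z\geq0$, so the right side is at least $\delta e^{t_0}>0$. This contradicts the minimum. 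Letting $\delta\to0$ and $T'\to T$ yields \eqref{eq:preserved-nc}.
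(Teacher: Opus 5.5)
Your outline matches the paper's: the cone gap excludes the diagonal, the slack $b>a$ meets the shift constraint, and $\mathcal S_b$ is applied at the contact pair. However, two steps fail as written.

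\textbf{The closing bookkeeping is wrong for the additive correction $\delta e^{t}$.} Along the optimal paired directions, the two-point computation gives $(\partial_t-\mathcal L)k=\frac{2}{d^2}\mathcal E_z(A,B)+z|A|_F^2+\frac{8}{d^4}\sum_i\dot f_i(\kappa_i-z)\xi_i^2$. Add $a(\partial_t-\Delta_F)F=aF|A|_F^2$ and $\partial_t(\delta e^t)=\delta e^t$, and substitute $z=-aF-\delta e^{t_0}$. You get
\[
 0\geq\frac{2}{d^2}\mathcal E_z(A,B)+\frac{8}{d^4}\sum_i\dot f_i(\kappa_i-z)\xi_i^2+\delta e^{t_0}\bigl(1-|A|_F^2\bigr).
\]
The reaction term $aF|A|_F^2$ cancels only the $-aF|A|_F^2$ part of $z|A|_F^2$. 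The correction also enters $z$, where it is multiplied by $|A|_F^2$, but its own time derivative is only $\delta e^{t}$. Since $|A|_F^2$ need not be below $1$, there is no contradiction.

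You can repair this in either of two ways. One is to use $\delta e^{Ct}$ with $C>\max_{M\times[0,T']}|A|_F^2$ and $\delta e^{CT'}<(b-a)\min_{M\times[0,T']}F$. The other is the paper's multiplicative perturbation $\Psi=k+(a+\eta e^t)F$ with $a+\eta e^t<b$ on $[0,\tau]$. For the support $\phi=-(a+\eta e^t)F$ one then has exactly $(\partial_t-\Delta_F)\phi=z|A|_F^2-\eta e^{t_0}F$. The comparison leaves $-\eta e^{t_0}F\geq0$, with no curvature bound needed. The bounds $z\geq-bF(A)$ and $\lambda_{\min}(P)\geq\eta e^{t_0}F(A)>0$ then follow immediately from the construction and the cone gap.

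\textbf{You have no argument for $Q>0$, and the mechanism you suggest cannot supply one.} Neither $aF(x,t)$ nor $\delta e^{t}$ depends on $y$. So the $yy$-block of the Hessian at the minimum is exactly $\frac{2}{d^2}Q$ and gives only $Q\geq0$. The value $z$ is fixed by the contact and cannot be perturbed.

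The missing idea is the coupling in the joint Hessian. For paired variations $(u,\Refl v)$, the second variation of $k$ is $\frac{2}{d^2}\{P(u,u)-2P(u,v)+Q(v,v)\}$ plus terms depending only on $u$. Hence the nonnegative Hessian of $k-\phi$ gives a block inequality with mixed block $-P$. If $Qv=0$, test on $(u,tv)$ and let $t\to\pm\infty$. This forces $\inner{Pu}{v}=0$ for all $u$, hence $Pv=0$, and $v=0$ since $P>0$.

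So strict positivity of $Q$ is inherited from that of $P$ through the mixed block; this is Lemma~\ref{lem:block-minimum}. The same completion of the square justifies the choice $v=Q^{-1}Pu$ that produces $PQ^{-1}P$. Finally, $P>0$, i.e.\ $\kappa_i>z$, is also what makes the $\xi_i^2$ term above nonnegative; your displayed inequality omits that term.
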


Here is a completely explicit non-convex speed to which the theorem applies.
For $A\in\Sym(n)$ write
\[
A^{\circ}=A-\frac{\tr A}{n}\Id.
\]

\begin{theorem}[explicit non-convex speed on a cone allowing negative curvatures]
\label{thm:explicit}
Let $n\geq2$, set
\[
 \beta_n=\sqrt{\frac{n-1}{n}}-\frac1n,
\]
and choose $a>\beta_n$.  Put
\[
 C_{n,a}=4n\bigl(1+2n(a+1)\bigr)+\frac{n-1}{n},
\]
and assume
\begin{equation}\label{eq:epsilon-range}
 0<\varepsilon<
 \min\left\{1-\frac{\beta_n}{a},\frac1{C_{n,a}}\right\}.
\end{equation}
Define
\begin{equation}\label{eq:Gamma-explicit}
 \Gamma=\left\{A\in\Sym(n):\tr A>0,
 \ \norm{A^{\circ}}<\tr A\right\}
\end{equation}
and
\begin{equation}\label{eq:F-explicit}
 F_{\varepsilon}(A)=\tr A-
 \varepsilon\frac{\norm{A^{\circ}}^2}{\tr A}.
\end{equation}
Then $F_{\varepsilon}$ is positive, homogeneous of degree one, and strictly
elliptic on $\Gamma$.  It is concave and is not convex.  It satisfies
$\mathcal S_{a+1}$, and
\[
 \lambda_{\min}(A)+aF_{\varepsilon}(A)>0
 \qquad(A\in\Gamma).
\]
Consequently every smooth compact embedded solution of
\[
 \partial_tX=-F_{\varepsilon}(A)\nu
\]
which is initially $\Gamma$-admissible and satisfies
$\ka\geq-aF_{\varepsilon}$ remains $\Gamma$-admissible and continues to
satisfy that estimate throughout its smooth embedded existence interval.
In particular, this is an exterior noncollapsing theorem for hypersurfaces
which may have negative principal curvature.
\end{theorem}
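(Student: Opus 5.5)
The plan is to verify the hypotheses of Theorem~\ref{thm:general} with $b=a+1$ and $F=F_\varepsilon$, and separately to show that $\Gamma$ is preserved by the flow.

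\textbf{Elementary properties.} Write $H=\tr A$ and $\rho=\norm{A^\circ}/H\in[0,1)$, so that $F_\varepsilon=H(1-\varepsilon\rho^2)\ge(1-\varepsilon)H>0$. Homogeneity is immediate. For ellipticity, compute
\[
\dot F^{ij}=(1+\varepsilon\rho^2)\delta^{ij}-2\varepsilon A^{\circ}_{ij}/H .
\]
Its eigenvalues are at least $1+\varepsilon\rho^2-2\varepsilon\rho>0$ when $\varepsilon<1$.

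For concavity, note that $\norm{A^\circ}^2/H$ is a quadratic-over-linear function. It is therefore convex on $\{H>0\}$ as a perspective function, so $F_\varepsilon$ is concave. To see that $F_\varepsilon$ is not convex, restrict it to a line of trace-free perturbations $A=\Id+sE$. Along this line $F_\varepsilon$ is strictly concave in $s$, so it is not affine there.

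For the cone gap, use the standard bound $\lambda_{\min}\ge H/n-\sqrt{(n-1)/n}\,\norm{A^\circ}$. This gives
\[
\lambda_{\min}+aF_\varepsilon\ge H\Bigl(\tfrac1n-\sqrt{\tfrac{n-1}{n}}\,\rho+a(1-\varepsilon\rho^2)\Bigr).
\]
Since $\rho<1$, the bracket exceeds $\tfrac1n-\sqrt{\tfrac{n-1}{n}}+a(1-\varepsilon)$. This is positive exactly because $a(1-\varepsilon)>\beta_n$, which is the first constraint in \eqref{eq:epsilon-range}.

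\textbf{The condition $\mathcal S_{a+1}$.} This is the main obstacle. Fix $A,B\in\Gamma$ and $z\ge-(a+1)F(A)$ with $P,Q>0$. The plan is to use concavity to write $F(B)-F(A)=\dot F(A)(B-A)-R$, where $R\ge0$ is the concavity defect. Note that $B-A=Q-P$. Then
\[
\mathcal E_z=\dot F(A)\bigl(PQ^{-1}P-2P+Q\bigr)-R=\dot F(A)\bigl((P-Q)Q^{-1}(P-Q)\bigr)-R .
\]
The first term is nonnegative, since $D:=P-Q=A-B$ and $DQ^{-1}D\ge0$. Moreover, $\dot F(A)\ge c\,\Id$ with $c=(1-\varepsilon)^2$.

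So the task reduces to an estimate of the form $R\le c'\,\tr(DQ^{-1}D)$. Since $F$ is $\tr A$ minus a quadratic-over-linear term, the defect has the explicit form
\[
R=\varepsilon\,\frac{\norm{B^\circ\tr A-A^\circ\tr B}^2}{\tr A\,\tr A\,\tr B}\le \varepsilon\,\frac{C\norm{D}^2}{\tr A\tr B}\cdot(\text{factors controlled by }\rho<1).
\]
Here one uses $B^\circ\tr A-A^\circ\tr B=D^\circ\tr A-A^\circ\tr D$ and $\norm{A^\circ}<\tr A$. On the other side, $\tr(DQ^{-1}D)\ge\norm{D}^2/\lambda_{\max}(Q)$. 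We also have $\lambda_{\max}(Q)\le\lambda_{\max}(B)-z\le \tr B\,(1+\text{const})+(a+1)\tr A$, using that on $\Gamma$ one has $\lambda_{\max}(B)\le C_n\tr B$. The positivity of $\tr A$ and $\tr B$ relative to the shift range is what makes the constants uniform.

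Balancing these bounds, with care in the regime where $\tr B\ll\tr A$ or $\tr B\gg\tr A$, yields the inequality whenever $\varepsilon C_{n,a}<1$. The constant $4n(1+2n(a+1))$ reflects exactly the bound on $\lambda_{\max}(Q)/\min(\tr A,\tr B)$, possibly after splitting $D$ into its trace and trace-free parts. This is where I expect the real work: $R$ must be controlled with denominator $\tr B$, whereas the $Q$-bound involves $\tr A$. I would first try the case split $\tr B\ge\tr A$ versus $\tr B<\tr A$, and in the latter use the fact that $P,Q>0$ and $z\ge-(a+1)F(A)$ force $\tr D$ to be comparable to $\norm{D}$.

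\textbf{Cone preservation and conclusion.} Use the maximum principle on $G=\tr A-\norm{A^\circ}$, or equivalently on $\norm{A^\circ}^2/H^2<1$. The evolution $\partial_t h=\dot F^{kl}\nabla_k\nabla_l h+\ddot F(\nabla h,\nabla h)+\dot F^{kl}h^2_{kl}h-$ (standard terms) gives a gradient term $\ddot F\le0$ of the favorable sign for a concave-type quotient, and reaction terms. At a first point where $\norm{A^\circ}=H$, these reaction terms must be checked to be strictly favorable for $\varepsilon$ small. I would do this in an eigenframe, reducing to an inequality on the boundary of $\Gamma$. With $\Gamma$ preserved, Theorem~\ref{thm:general} applies with $b=a+1>a$, giving the stated conclusion.
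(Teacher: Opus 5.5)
Your elementary properties are correct and essentially the paper's: positivity, ellipticity, convexity of $\norm{A^\circ}^2/\tr A$ as a perspective, non-convexity along $\Id+sE$, and the cone gap. So is your reduction of $\mathcal S_{a+1}$ to an inequality $R\le c\,\tr(DQ^{-1}D)$. Here $R=\varepsilon\tr B\,\norm{U_B-U_A}^2$ is the concavity defect, with $U_A=A^\circ/\tr A$ and $U_B=B^\circ/\tr B$.

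\textbf{The deferred $\mathcal S_{a+1}$ estimate.} The gap is in the step you leave undone. The only bound you indicate for the defect comes from $\norm{D^\circ\tr A-A^\circ\tr D}^2\le 2n(\tr A)^2\norm D^2$, that is, $R\le 2n\varepsilon\norm D^2/\tr B$. You then plan to balance it against $\tr(DQ^{-1}D)\ge\norm D^2/\lambda_{\max}(Q)$.

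This fails when $\tr B\ll\tr A$ and $z$ is near $-(a+1)F_\varepsilon(A)$. There $\lambda_{\max}(Q)$ really is of size $(a+1)\tr A$, so the comparison loses a factor $\tr A/\tr B$, which is unbounded. For example, take $A=\Id$ and $B=\delta(\Id+E)$ with $E$ trace-free, $0<\norm E<n$, and $\delta\to0$. In particular $\lambda_{\max}(Q)/\min\{\tr A,\tr B\}$ is not bounded, contrary to your remark on the constant.

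Showing that $\tr D$ is comparable to $\norm D$ cannot repair this. The blow-up sits in the factor $1/\tr B$, not in $\norm D$. The missing idea is to drop the $\norm D^2$-bound for the defect in this regime and use the cone directly. Suppose $\tr B<\tr A/2$. Then $\norm{U_A},\norm{U_B}<1$ give $R<4\varepsilon\tr B<2\varepsilon\tr A$. On the other side, $\norm D^2\ge(\tr D)^2/n>(\tr A)^2/(4n)$ and $\lambda_{\max}(Q)\le C(n,a)\tr A$ give $\tr(DQ^{-1}D)\ge c(n,a)\tr A$. The regime $\tr B\gg\tr A$ needs no special care.

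\textbf{Cone preservation.} Here the plan misplaces the difficulty. For any speed homogeneous of degree one, schematically $(\partial_t-\Delta_F)A=|A|_F^2A+\ddot F(\nabla A,\nabla A)$. Hence the reaction terms in the evolution of the scale-invariant quantity $\norm{A^\circ}^2/H^2$ cancel identically. There is nothing strictly favourable to verify at the boundary, and no smallness of $\varepsilon$ beyond $\varepsilon<1$ is used.

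The real issue is the gradient term, and concavity alone does not sign it. The $\ddot F$ term enters contracted against the indefinite matrix $A^\circ$. Through the denominator $H$ it even contributes $+4\varepsilon\norm U^2\norm{\nabla U}^2$, with the unfavourable sign.

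The paper instead uses an exact identity special to $F_\varepsilon$. Put $U=A^\circ/H$, so that $F_\varepsilon/H=1-\varepsilon\norm U^2$. Combine $(\partial_t-\Delta_F)F=F|A|_F^2$ with $(\partial_t-\Delta_F)H=H|A|_F^2-2\varepsilon H\norm{\nabla U}^2$. This gives $(\partial_t-\Delta_F-\frac2H\dot F^{ij}\nabla_iH\,\nabla_j)\norm U^2=-2\frac{F}{H}\norm{\nabla U}^2\le0$, in which the indefinite terms have cancelled.

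If you work with $\norm{A^\circ}^2/H^2$, you also need a positive lower bound for $H$ to exclude exit through $\tr A=0$. It comes from $F_\varepsilon\ge\min F_\varepsilon(\cdot,0)$ together with $H\ge F_\varepsilon$.
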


The dependence of $\varepsilon$ on $a$ is essential in this sufficient
parameter range.  For fixed $0<\varepsilon<1$, condition
\eqref{eq:epsilon-range} requires
\[
 \frac{\beta_n}{1-\varepsilon}<a<
 \frac{\varepsilon^{-1}-8n^2-4n-(n-1)/n}{8n^2}.
\]
Thus one cannot accommodate an arbitrary initial exterior constant by
increasing $a$ while keeping $\varepsilon$ fixed.  In
Remark~\ref{rem:nonconvex-data}, the initial hypersurface and $a$ are
chosen first, followed by $\varepsilon$.

\begin{corollary}[Two-sided noncollapsing]\label{cor:two-sided}
Under the hypotheses of Theorem~\ref{thm:explicit}, define
\[
 \overline{k}(x,t)=\sup_{y\ne x}k(x,y,t),\qquad
 C_{\mathrm{in}}=\sup_M
 \frac{\overline{k}(\cdot,0)}{F_\varepsilon(\cdot,0)}.
\]
Then $0<C_{\mathrm{in}}<\infty$ and
\[
 -aF_\varepsilon\leq\ka\leq\overline{k}
 \leq C_{\mathrm{in}}F_\varepsilon
\]
throughout the smooth embedded existence interval.  In particular, at
each point there are interior and exterior tangent balls of radii at least
\[
 \frac{\alpha}{F_\varepsilon},\qquad
 \alpha=\min\{C_{\mathrm{in}}^{-1},a^{-1}\}>0.
\]
\end{corollary}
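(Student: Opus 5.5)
The corollary follows by putting Theorem~\ref{thm:explicit} together with the interior estimate of Andrews, Langford and McCoy~\cite{ALM13} for concave speeds. The lower bound $-aF_\varepsilon\le\ka$ is exactly the conclusion of Theorem~\ref{thm:explicit}. The middle inequality $\ka\le\overline{k}$ holds trivially, since an infimum never exceeds a supremum over the same nonempty set. What remains is the finiteness and positivity of $C_{\mathrm{in}}$ and the upper bound $\overline{k}\le C_{\mathrm{in}}F_\varepsilon$.

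\emph{Finiteness and positivity of $C_{\mathrm{in}}$.} On the compact embedded initial hypersurface, $k(x,y,0)$ is bounded above uniformly. As $y\to x$ along a direction $v$, it tends to $h(v,v)$, hence is at most $\kappa_n(x)$. For $|X(x)-X(y)|\ge\delta$, compactness bounds it directly. In the intermediate range one uses the standard uniform local graph representation of an embedded compact hypersurface, so that $\overline{k}(\cdot,0)$ is a bounded (upper semicontinuous) function. Since $A\in\Gamma$ everywhere and $F_\varepsilon>0$ on $\Gamma$, compactness gives $\min_M F_\varepsilon(\cdot,0)>0$, and therefore $C_{\mathrm{in}}<\infty$. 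For positivity, fix $x$ with $\tr A(x)>0$ (true everywhere) and choose a unit $v$ with $h(v,v)\ge H/n>0$. The limit along $v$ gives $\overline{k}(x,0)\ge H/n>0$, so $C_{\mathrm{in}}>0$.

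\emph{Preservation of the interior bound.} The speed $F_\varepsilon$ is smooth, positive, homogeneous of degree one, strictly elliptic and concave on $\Gamma$ by Theorem~\ref{thm:explicit}. Moreover, the solution stays $\Gamma$-admissible on the whole smooth embedded existence interval. These are precisely the hypotheses of the interior noncollapsing theorem of~\cite{ALM13}. That theorem uses a two-point maximum principle applied to $Z=k-C_{\mathrm{in}}F$, where the concavity of $F$ controls the Hessian terms. It preserves $\overline{k}\le C F$ whenever this holds initially with $C=C_{\mathrm{in}}$. One point needs checking: that the ALM argument requires only concavity and ellipticity on an admissible cone, not convexity of the hypersurfaces. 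I expect this to be the only delicate step. I would verify it by inspecting the proof: the argument needs $F>0$ and the evolution $\partial_t F=\dot F^{ij}\nabla_i\nabla_jF+\dot F^{ij}(h^2)_{ij}F$, both of which hold on any cone where $F$ is positive, and the diagonal limit is handled by $\kappa_n\le C F$. If positivity of $F$ along the flow is needed, it follows from admissibility in $\Gamma$.

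\emph{Tangent balls.} Write $\ka\ge-aF_\varepsilon$ as $k(x,y)\ge -a F_\varepsilon$ for all $y$. By the characterisation in~\cite{AL16,ALM13}, this says that the ball of radius $1/(aF_\varepsilon)$ tangent at $x$ on the outer side contains no point of the hypersurface in its interior. Likewise, $\overline{k}\le C_{\mathrm{in}}F_\varepsilon$ gives an interior tangent ball of radius $1/(C_{\mathrm{in}}F_\varepsilon)$. Taking the smaller of the two radii, both balls have radius at least $\alpha/F_\varepsilon$ with $\alpha=\min\{C_{\mathrm{in}}^{-1},a^{-1}\}$.
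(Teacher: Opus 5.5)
Your proposal is correct and follows the paper's proof essentially step for step. It takes the lower bound from Theorem~\ref{thm:explicit}, gets $0<C_{\mathrm{in}}<\infty$ from boundedness of $k(\cdot,\cdot,0)$ together with $\min_M F_\varepsilon(\cdot,0)>0$ and the diagonal value $\overline{k}\geq\kappa_n\geq H/n$, takes the upper bound from the concave-speed interior estimate of \cite{ALM13}, and finishes with the tangent-ball interpretation. The one refinement concerns your check of the hypotheses of \cite{ALM13}: the facts to cite are that $F_\varepsilon$ is concave on the \emph{convex} cone $\Gamma$ (Lemma~\ref{lem:speed-properties}) and that admissibility is preserved (Lemma~\ref{lem:cone-preservation}), because concavity enters that argument through the two-point supporting-plane inequality $F(B)\leq F(A)+\dot F^{ij}(A)(B-A)_{ij}$ between the contact points, not merely through $F>0$ and the evolution equation for $F$.
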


The paper is organized as follows.  Section~\ref{sec:preliminaries}
fixes notation and establishes the evolution equations, matrix identities
and diagonal compactification used in the proofs.  Section~\ref{sec:geometry}
proves Theorem~\ref{thm:general} by a two-point maximum principle.
Section~\ref{sec:explicit-speed} verifies the finite-shift condition and
cone preservation for $F_\varepsilon$, proves
Theorem~\ref{thm:explicit} and Corollary~\ref{cor:two-sided}, and
exhibits closed nonconvex initial hypersurfaces covered by the theorem.

\section{Preliminaries}\label{sec:preliminaries}

We begin with the evolution of the speed, then establish the matrix
comparisons used in the two-point argument.  We then extend the
two-point function to diagonal directions so that the first-contact
argument takes place on a fixed compact space.

\subsection{Notation and evolution equations}

We use the metric, second fundamental form, and Weingarten map introduced
in Section~\ref{sec:introduction}.  Let $\nabla$ denote the Levi-Civita
connection of $g$.  Indices are raised and lowered using $g$, and repeated
indices are summed.  Matrix contractions and norms are Euclidean in a
$g$-orthonormal frame.  For a self-adjoint endomorphism we also write
$A(u,v)=g(Au,v)$.  The Gauss--Weingarten and Codazzi identities are
\begin{align}
 \overline\nabla_u dX(v)&=dX(\nabla_uv)-h(u,v)\nu,
 \qquad \overline\nabla_u\nu=dX(Au),\notag\\
 \nabla_i h_{jk}&=\nabla_j h_{ik}.\label{eq:gauss-weingarten}
\end{align}
We identify tangent vectors with their images under $dX$ when taking
ambient inner products.

The second matrix derivative, supplementing the first derivative defined
in the Introduction, is
\begin{equation}\label{eq:matrix-derivatives}
 \ddot F^{ij,kl}(A)B_{ij}C_{kl}
 =\left.\frac{\partial^2}{\partial s\,\partial r}\right|_{s=r=0}
 F(A+sB+rC).
\end{equation}
Along the flow, evaluation at $A(x,t)$ is understood unless indicated
otherwise.  We set
\begin{equation}\label{eq:weighted-notation}
 \Delta_F=\dot F^{ij}\nabla_i\nabla_j,
 \qquad |A|_F^2=\dot F^{ij}h_i{}^k h_{kj}.
\end{equation}
If $\dot f_i=\partial f/\partial\kappa_i$, then in a principal frame
$\dot F^{ij}=\dot f_i\delta_{ij}$.  Homogeneity and the chain rule give
\begin{align}
 \dot F^{ij}h_{ij}&=F,\notag\\
 \nabla_iF&=\dot F^{kl}\nabla_i h_{kl},\notag\\
 \nabla_i\nabla_jF
 &=\dot F^{kl}\nabla_i\nabla_j h_{kl}
 +\ddot F^{kl,pq}\nabla_i h_{kl}\nabla_j h_{pq}.
 \label{eq:curvature-chain-rule}
\end{align}

\begin{lemma}[Basic evolution equations]\label{lem:basic-evolution}
Under \eqref{eq:flow}, in fixed coordinates on $M$,
\begin{align}
 \partial_tg_{ij}&=-2Fh_{ij},\qquad
 \partial_t\nu=\nabla F,\notag\\
 \partial_th_{ij}&=\nabla_i\nabla_jF-Fh_i{}^k h_{kj},\notag\\
 \partial_tA&=(\nabla^2F)^\sharp+FA^2,
 \label{eq:basic-evolution}
\end{align}
where the sharp raises one index using $g$.  Moreover,
\begin{equation}\label{eq:F-evolution}
 (\partial_t-\Delta_F)F=F|A|_F^2.
\end{equation}
\end{lemma}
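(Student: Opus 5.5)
We prove Lemma~\ref{lem:basic-evolution} by a direct computation in fixed local coordinates on $M$, using only \eqref{eq:flow}, the Gauss--Weingarten and Codazzi identities \eqref{eq:gauss-weingarten}, and the homogeneity relations \eqref{eq:curvature-chain-rule}. The steps are the metric, then the normal, then $h_{ij}$ and $A$, and finally the evolution of $F$.

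\emph{Metric and normal.} Since $\partial_i\partial_tX=-\partial_iF\,\nu-F\,\partial_i\nu=-\partial_iF\,\nu-F h_i{}^k\partial_kX$, differentiating $g_{ij}=\inner{\partial_iX}{\partial_jX}$ gives
\[
\partial_tg_{ij}=-F(h_i{}^kg_{kj}+h_j{}^kg_{ki})=-2Fh_{ij}.
\]
For $\nu$, the identity $\inner{\nu}{\nu}=1$ shows that $\partial_t\nu$ is tangential. Differentiating $\inner{\nu}{\partial_jX}=0$ gives $\inner{\partial_t\nu}{\partial_jX}=-\inner{\nu}{\partial_j\partial_tX}=\partial_jF$, hence $\partial_t\nu=\nabla F$.

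\emph{Second fundamental form.} With the stated sign convention, $h_{ij}=-\inner{\partial_i\partial_jX}{\nu}$. Differentiating in $t$ gives two terms. The first is
\[
-\inner{\partial_i\partial_j\partial_tX}{\nu}=\partial_i\partial_jF+F\inner{\partial_i(h_j{}^k\partial_kX)}{\nu}=\partial_i\partial_jF-Fh_j{}^kh_{ik}.
\]
The second is $-\inner{\partial_i\partial_jX}{\nabla F}=-\Gamma_{ij}^k\partial_kF$, using the tangential part of $\partial_i\partial_jX$. These combine to
\[
\partial_th_{ij}=\nabla_i\nabla_jF-Fh_i{}^kh_{kj}.
\]
For $A=g^{-1}h$ we use $\partial_tg^{ij}=2Fh^{ij}$. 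Then
\[
\partial_t h_j{}^i=g^{ik}\bigl(\nabla_k\nabla_jF-Fh_k{}^lh_{lj}\bigr)+2Fh^{ik}h_{kj}=(\nabla^2F)^{\sharp}{}_j{}^i+F(A^2)_j{}^i.
\]

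\emph{Evolution of $F$.} $F$ is a function of the mixed tensor $A$, and $\dot F$ is taken at the matrix of $A$ in an orthonormal frame. Hence $\partial_tF=\dot F_i{}^{j}\partial_th_j{}^i$, which equals
\[
\dot F^{ij}\nabla_i\nabla_jF+F\dot F^{ij}h_i{}^kh_{kj}=\Delta_FF+F|A|_F^2.
\]
This is \eqref{eq:F-evolution}.

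The only delicate point is this last step. Differentiating $F$ through $h_{ij}$ rather than $A$ would pick up the metric variation and spoil the $|A|_F^2$ term. Working with $A$, and noting that $F$ is orthogonally invariant, so that its derivative in $A$ is well defined as a self-adjoint tensor, resolves this. The remaining work is bookkeeping of the signs dictated by the outward normal and $h(u,v)=\inner{\overline\nabla_u\nu}{dX(v)}$. No Codazzi-type commutation is needed here; the identities \eqref{eq:curvature-chain-rule} enter only when expanding $\Delta_F F$ later in the paper.
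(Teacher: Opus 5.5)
Your proposal is correct. The metric, normal, $h_{ij}$ and $A$ computations are the same fixed-coordinate differentiations the paper sketches, and you carry out the details correctly.

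The only real difference is in the evolution of $F$. The paper uses the moving frame $\partial_t e_i = FAe_i$, which stays $g(t)$-orthonormal because $\partial_t g=-2Fh$. In that frame $t\mapsto h(e_i,e_j)$ is a genuine curve in $\Sym(n)$, its derivative is $\nabla^2F(e_i,e_j)+Fg(Ae_i,Ae_j)$, and the matrix chain rule applies literally.

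You instead assert $\partial_tF=\dot F_i{}^j\partial_t h_j{}^i$ for the coordinate matrix of $A$. That matrix is not symmetric, so $F$ is not literally defined on it. The claim is true, but the "hence" hides a commutator step. For an arbitrary $g(t)$-orthonormal frame $E(t)$ with $\hat A=E^{-1}AE$, one has
\[
 \tfrac{d}{dt}\hat A=E^{-1}(\partial_tA)E+[\hat A,E^{-1}\partial_tE].
\]
The commutator pairs to zero with $\dot F(\hat A)$, because orthogonal invariance forces $\dot F(\hat A)$ to commute with $\hat A$. Orthogonal invariance is needed for this commutation, not merely for $\dot F$ to be a well-defined tensor.

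The paper's frame is exactly the one with $E^{-1}\partial_tE=F\hat A$. There the commutator vanishes identically rather than after pairing, so the frame absorbs the metric variation automatically.

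Your route is the standard "$F$ is a function of the Weingarten map" formalism, which is more reusable. The paper's route needs no commutation property and stays entirely within the chain rule on $\Sym(n)$.
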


\begin{proof}
Write $X_i=\partial_iX$.  Differentiating $g_{ij}=\langle X_i,X_j\rangle$ and
$\langle\nu,X_i\rangle=0$ gives the metric and normal equations.
Differentiating $h_{ij}=\langle\partial_i\nu,X_j\rangle$ then gives
$\partial_th_{ij}=\nabla_i\nabla_jF-Fh_i{}^kh_{kj}$.
Since $\partial_tg^{ij}=2Fh^{ij}$, raising an index yields the
asserted evolution of $A$.

Choose a moving orthonormal frame with $\partial_te_i=FAe_i$.
In this frame,
\[
 \frac{d}{dt}h(e_i,e_j)
 =(\nabla^2F)(e_i,e_j)+Fg(Ae_i,Ae_j).
\]
The matrix chain rule gives \eqref{eq:F-evolution}.
\end{proof}

\subsection{Matrix identities}

For a twice differentiable curve $A(s)$ in $\Gamma$, the chain rule gives
\begin{align}
 \frac{d}{ds}F(A(s))
 &=\dot F^{ij}(A(s))A'_{ij}(s),\label{eq:matrix-chain-first}\\
 \frac{d^2}{ds^2}F(A(s))
 &=\ddot F^{ij,kl}(A(s))A'_{ij}(s)A'_{kl}(s)
   +\dot F^{ij}(A(s))A''_{ij}(s).\label{eq:matrix-chain-second}
\end{align}
The same derivative convention will be used for other smooth matrix
functions.  In particular, if $P(s)$ is an invertible matrix curve,
differentiating $P(s)P(s)^{-1}=\Id$ gives
\begin{equation}\label{eq:inverse-matrix-derivative}
 \frac{d}{ds}P(s)^{-1}=-P(s)^{-1}P'(s)P(s)^{-1}.
\end{equation}

\begin{lemma}[Shifted comparison identities]\label{lem:shift-identities}
Let $A,B\in\Gamma$ and $z\in\R$ satisfy
$P=A-z\Id>0$ and $Q=B-z\Id>0$.  Put $W=B-A$.  Then
\begin{equation}\label{eq:schur-identity}
 PQ^{-1}P-P=-W+WQ^{-1}W,
\end{equation}
and hence
\begin{equation}\label{eq:Ez-bregman}
 \mathcal E_z(A,B)
 =F(B)-F(A)-\dot F^{ij}(A)W_{ij}
 +\dot F^{ij}(A)(WQ^{-1}W)_{ij}.
\end{equation}
For fixed $A,z$ and $C\in\Sym(n)$, the function
$s\mapsto\mathcal E_z(A,A+sC)$ has value and first derivative zero
at $s=0$, and
\begin{equation}\label{eq:local-shift-condition}
 \left.\frac{d^2}{ds^2}\right|_{s=0}\mathcal E_z(A,A+sC)
 =\ddot F^{ij,kl}(A)C_{ij}C_{kl}
 +2\dot F^{ij}(A)(CP^{-1}C)_{ij}.
\end{equation}
If $F$ satisfies $\mathcal S_b$ and $z\geq-bF(A)$, this last expression
is nonnegative.  Moreover, for fixed $A,B$,
\begin{equation}\label{eq:shift-monotonicity}
 \frac{\partial}{\partial z}\mathcal E_z(A,B)
 =\dot F^{ij}(A)\bigl[W(B-z\Id)^{-2}W\bigr]_{ij}\geq0
\end{equation}
whenever $P,Q>0$.
\end{lemma}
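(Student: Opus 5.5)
The lemma is pure matrix algebra plus the chain rule, and I will verify its parts in the order stated.

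\emph{The identity \eqref{eq:schur-identity}.} Since $Q-P=B-A=W$, substitute $P=Q-W$ and expand:
\[
(Q-W)Q^{-1}(Q-W)=Q-2W+WQ^{-1}W.
\]
Subtracting $P=Q-W$ gives $-W+WQ^{-1}W$, as claimed. Contracting with $\dot F^{ij}(A)$ and inserting the result into \eqref{eq:Ez-def} yields \eqref{eq:Ez-bregman} at once.

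\emph{The second variation \eqref{eq:local-shift-condition}.} Set $B=A+sC$, so that $W=sC$ and $Q=P+sC$. For small $|s|$ we have $Q>0$, and by \eqref{eq:Ez-bregman}
\[
\mathcal E_z(A,A+sC)=\bigl[F(A+sC)-F(A)-s\dot F^{ij}(A)C_{ij}\bigr]+s^2\dot F^{ij}(A)\bigl(C(P+sC)^{-1}C\bigr)_{ij}.
\]
The bracket vanishes to first order at $s=0$, and its second derivative there is $\ddot F(A)(C,C)$ by \eqref{eq:matrix-chain-second}. The last term is $s^2$ times a smooth function of $s$, so it also has value and first derivative zero. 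Its second derivative at $0$ is twice the function evaluated at $s=0$, namely $2\dot F^{ij}(A)(CP^{-1}C)_{ij}$.

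\emph{Nonnegativity under $\mathcal S_b$.} Fix $A$ and $z\geq-bF(A)$ with $P>0$. Because $\Gamma$ is open and $Q=P+sC>0$ for small $|s|$, the matrix $A+sC$ is an admissible $B$ for all small $|s|$. By $\mathcal S_b$, the smooth function $s\mapsto\mathcal E_z(A,A+sC)$ is therefore $\geq0$ near $0$. It vanishes to first order at $0$, so $0$ is a local minimum, and its second derivative there is nonnegative. This is the only step needing any care: one must check that both openness conditions hold for small $|s|$, which they do.

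\emph{Monotonicity \eqref{eq:shift-monotonicity}.} In \eqref{eq:Ez-bregman}, $A$, $B$ and $W$ do not depend on $z$; only $Q=B-z\Id$ does. By \eqref{eq:inverse-matrix-derivative} with $\partial_zQ=-\Id$,
\[
\partial_zQ^{-1}=Q^{-2}.
\]
Hence $\partial_z\mathcal E_z=\dot F^{ij}(A)(WQ^{-2}W)_{ij}$. Since $W$ is symmetric and $Q^{-2}>0$, the matrix $WQ^{-2}W=(Q^{-1}W)^{\mathsf T}(Q^{-1}W)$ is positive semidefinite. Its pairing with the positive definite matrix $\dot F(A)$ is a trace of a product of a positive definite and a positive semidefinite matrix, and is therefore nonnegative.
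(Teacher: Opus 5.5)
Your proposal is correct and follows the paper's route. Like the paper, you use the substitution $P=Q-W$ for the Schur identity, and the exact expansion $\mathcal E_z(A,A+sC)=\bigl[F(A+sC)-F(A)-s\dot F^{ij}(A)C_{ij}\bigr]+s^2\dot F^{ij}(A)\bigl(C(P+sC)^{-1}C\bigr)_{ij}$ for the second variation and for the local-minimum argument under $\mathcal S_b$; the monotonicity comes from differentiating the $Q^{-1}$ term in $z$. The only difference is that you state explicitly why pairing $\dot F(A)$ with $WQ^{-2}W$ is nonnegative, which the paper leaves implicit.
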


\begin{proof}
Since $P=Q-W$, ordinary matrix multiplication gives
\[
 PQ^{-1}P-P
 =(Q-W)Q^{-1}(Q-W)-(Q-W)
 =-W+WQ^{-1}W.
\]
Substitution into the definition of $\mathcal E_z$ proves
\eqref{eq:Ez-bregman}.  Moreover, $WQ^{-1}W\geq0$, since for every $v$,
\[
 v^{\mathsf T}WQ^{-1}Wv=(Wv)^{\mathsf T}Q^{-1}(Wv)\geq0.
\]

For the local identity, openness implies $A+sC\in\Gamma$ and
$P+sC>0$ for all sufficiently small $|s|$.  Substituting $W=sC$ into
\eqref{eq:Ez-bregman} gives the exact formula
\begin{align*}
 \mathcal E_z(A,A+sC)
 ={}&F(A+sC)-F(A)-s\dot F^{ij}(A)C_{ij}\\
 &+s^2\dot F^{ij}(A)\bigl[C(P+sC)^{-1}C\bigr]_{ij}.
\end{align*}
The constant and linear terms vanish, and differentiating twice at
$s=0$ proves \eqref{eq:local-shift-condition}.
If $F$ satisfies $\mathcal S_b$ and $z\geq-bF(A)$, the shift bound
is unchanged along this variation.  Hence
$\mathcal E_z(A,A+sC)\geq0$, with equality at $s=0$, so its second
derivative there is nonnegative.

Finally, differentiation of \eqref{eq:Ez-bregman} in $z$ gives
\eqref{eq:shift-monotonicity}, since
\[
 WQ^{-2}W=(Q^{-1}W)^{\mathsf T}(Q^{-1}W)\geq0.
\]
\end{proof}

Thus deeper negative shifts impose a stronger condition:
$\mathcal S_{b_2}$ implies $\mathcal S_{b_1}$ if
$0\leq b_1\leq b_2$.  For a fixed pair $A,B$ admitting any shift in
Definition~\ref{def:shift}, the lowest shift $z=-bF(A)$ is itself
admissible.  By \eqref{eq:shift-monotonicity}, it suffices to check
$\mathcal E_{-bF(A)}(A,B)\geq0$ for all pairs with
$A+bF(A)\Id>0$ and $B+bF(A)\Id>0$.  If either positivity condition
fails, that pair admits no permitted shift.

\begin{lemma}[The inverse-dual supporting inequality]
\label{lem:inverse-support}
Suppose that $F$ is defined on the positive definite cone, and set
\[
 F_*(R)=F(R^{-1})^{-1},\qquad R>0.
\]
Then $F_*$ is concave if and only if, for all $A,B>0$,
\begin{equation}\label{eq:inverse-concavity-support}
 \dot F^{ij}(A)(AB^{-1}A)_{ij}\geq\frac{F(A)^2}{F(B)}.
\end{equation}
\end{lemma}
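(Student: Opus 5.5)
The natural route is through the supporting-hyperplane characterization of concave one-homogeneous functions, applied to $F_*$. Write $\dot F_*$ for the derivative of $F_*$. Since $F$ is homogeneous of degree one, so is $F_*$: indeed $F_*(cR)=F(c^{-1}R^{-1})^{-1}=cF_*(R)$. A positive, one-homogeneous, $C^1$ function $G$ on the open convex cone $\Gplus$ is concave if and only if
\[
 \dot G^{ij}(R)S_{ij}\geq G(S)\qquad\text{for all }R,S>0 .
\]
The forward implication comes from the tangent-plane inequality $G(S)\leq G(R)+\dot G(R)(S-R)$ together with Euler's identity $\dot G(R)R=G(R)$. For the converse, Euler's identity turns the displayed inequality into $G(S)\leq G(R)+\dot G(R)(S-R)$ for all $R,S$. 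A differentiable function lying below all of its tangent planes on a convex domain is concave, because it is then the infimum of affine functions.

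So the plan is to compute $\dot F_*^{ij}(R)S_{ij}$ and rewrite it in terms of $A=R^{-1}$ and $B=S^{-1}$. Differentiate along $R+sS$. By \eqref{eq:inverse-matrix-derivative}, $\frac{d}{ds}(R+sS)^{-1}|_{s=0}=-R^{-1}SR^{-1}$, so the chain rule \eqref{eq:matrix-chain-first} gives
\[
 \dot F_*^{ij}(R)S_{ij}
 =F(R^{-1})^{-2}\,\dot F^{ij}(R^{-1})(R^{-1}SR^{-1})_{ij}
 =\frac{\dot F^{ij}(A)(AB^{-1}A)_{ij}}{F(A)^2}.
\]
On the other side, $F_*(S)=F(B)^{-1}$. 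Hence the supporting inequality for $G=F_*$ at the pair $(R,S)$ is exactly
\[
 \dot F^{ij}(A)(AB^{-1}A)_{ij}\geq\frac{F(A)^2}{F(B)} .
\]
This holds after multiplying through by $F(A)^2>0$. Since inversion is a bijection of $\Gplus$, letting $(R,S)$ range over all pairs of positive definite matrices is the same as letting $(A,B)$ range over all such pairs. That gives the equivalence with \eqref{eq:inverse-concavity-support}.

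The step I would check most carefully is the converse direction of the supporting-hyperplane characterization. It relies on $\Gplus$ being convex and $F_*$ being $C^1$, both of which hold because $F$ is smooth and positive. One also needs $F_*$ to be defined and positive on all of $\Gplus$, which holds because $F$ is positive there. No ellipticity is needed for this lemma. The remaining manipulations are routine.
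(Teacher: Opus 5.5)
Your proposal is correct and follows essentially the same route as the paper. Both compute $\dot F_*^{ij}(R)S_{ij}=F(A)^{-2}\dot F^{ij}(A)(ASA)_{ij}$ via the inverse-derivative rule, and both use Euler's identity for the one-homogeneous $F_*$ to turn \eqref{eq:inverse-concavity-support} (with $S=B^{-1}$) into the global supporting-plane inequalities, which are equivalent to concavity on the convex cone $\Gplus$. Your justification of the converse step, that $F_*$ is then an infimum of affine functions, supplies a detail the paper leaves implicit.
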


\begin{proof}
The inverse derivative and scalar reciprocal rule give, for
$R=A^{-1}>0$ and every $S\in\Sym(n)$,
\begin{equation}\label{eq:inverse-dual-derivative}
 \dot F_*^{ij}(R)S_{ij}
 =F(A)^{-2}\dot F^{ij}(A)(ASA)_{ij}.
\end{equation}
Since $F_*$ is homogeneous of degree one, Euler's identity gives
$\dot F_*^{ij}(R)R_{ij}=F_*(R)$.  Consequently
\eqref{eq:inverse-concavity-support}, with $S=B^{-1}$, is equivalent to
\[
 F_*(S)\leq F_*(R)+\dot F_*^{ij}(R)(S-R)_{ij}
 \qquad(R,S>0).
\]
On the convex positive definite cone, these global supporting-plane
inequalities are equivalent to concavity.
\end{proof}

The following block-matrix fact will justify the inverse of the
shifted second-point curvature matrix at a contact pair.

\begin{lemma}[Block-matrix minimization]\label{lem:block-minimum}
Let $P,Q,C\in\Sym(n)$ with $P>0$.  If
\[
 \begin{pmatrix}C&-P\\-P&Q\end{pmatrix}\geq0,
\]
then $Q>0$ and $C-PQ^{-1}P\geq0$.  For any $Q>0$ and any
$C\in\Sym(n)$, the exact identity
\begin{align}
 \langle Cu,u\rangle-2\langle Pu,v\rangle+\langle Qv,v\rangle
 ={}&\langle(C-PQ^{-1}P)u,u\rangle\notag\\
 &+\langle Q(v-Q^{-1}Pu),v-Q^{-1}Pu\rangle
 \label{eq:block-minimum}
\end{align}
holds for all $u,v\in\R^n$.  In particular, its minimum over $v$ is attained at
$v=Q^{-1}Pu$.
\end{lemma}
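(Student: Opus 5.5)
The identity \eqref{eq:block-minimum} is the place to start. It is a completing-the-square computation and needs no hypothesis on $C$. For $Q>0$, put $w=v-Q^{-1}Pu$ and expand
\[
\langle Qw,w\rangle=\langle Qv,v\rangle-2\langle Pu,v\rangle+\langle PQ^{-1}Pu,u\rangle,
\]
using the symmetry of $P$ and $Q^{-1}$. Adding $\langle(C-PQ^{-1}P)u,u\rangle$ gives exactly the left-hand side. Since $Q>0$, the term $\langle Qw,w\rangle$ is nonnegative and vanishes only when $w=0$. So for fixed $u$ the minimum over $v$ is $\langle(C-PQ^{-1}P)u,u\rangle$, attained uniquely at $v=Q^{-1}Pu$.

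Next I show that the block inequality forces $Q>0$. Testing the block matrix on $(0,v)$ gives $\langle Qv,v\rangle\ge0$, so $Q\ge0$. To rule out a kernel, suppose $Qv=0$ with $v\neq0$, and test on $(tu,v)$ for $t\in\R$ and arbitrary $u$. This gives
\[
t^2\langle Cu,u\rangle-2t\langle Pu,v\rangle\ge0\qquad\text{for all }t.
\]
A quadratic in $t$ with no constant term is nonnegative for all $t$ only if its linear coefficient vanishes, so $\langle Pu,v\rangle=0$ for every $u$. Hence $Pv=0$, which contradicts $P>0$. Therefore $Q$ is positive semidefinite with trivial kernel, that is, $Q>0$.

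Now that $Q>0$, the identity applies. Choose $v=Q^{-1}Pu$; the block inequality then says the left side of \eqref{eq:block-minimum} is nonnegative, while the right side reduces to $\langle(C-PQ^{-1}P)u,u\rangle$. So $C-PQ^{-1}P\ge0$. The only step that needs any care is the invertibility of $Q$, and it is handled by the quadratic-in-$t$ argument together with $P>0$.
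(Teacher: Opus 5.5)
Your proof is correct and follows essentially the same route as the paper. You use the completing-the-square identity, test the block form on $(0,v)$ and then on a scaled pair with $v\in\ker Q$ to force $Q>0$ via $P>0$, and finally evaluate at $v=Q^{-1}Pu$ to get $C-PQ^{-1}P\geq0$; the only cosmetic differences are that you prove the identity first and scale $u$ rather than $v$ in the kernel test, which gives a quadratic rather than linear expression in $t$ but the same conclusion $\langle Pu,v\rangle=0$.
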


\begin{proof}
The block-matrix inequality means that
\[
 \langle Cu,u\rangle-2\langle Pu,v\rangle+\langle Qv,v\rangle\geq0
 \qquad\text{for all }u,v\in\R^n.
\]
Taking $u=0$ gives $\langle Qv,v\rangle\geq0$ for every $v$, so
$Q\geq0$.  To prove strict positivity, let $v\in\ker Q$.
Testing the same inequality on $(u,tv)$ gives
\[
 \langle Cu,u\rangle-2t\langle Pu,v\rangle\geq0
 \qquad\text{for every }u\in\R^n\text{ and }t\in\R.
\]
For fixed $u$, a nonzero coefficient $\langle Pu,v\rangle$ would make
the left-hand side negative for $t$ of the appropriate sign and
sufficiently large magnitude.  Hence $\langle Pu,v\rangle=0$ for
every $u$.  Since $P$ is symmetric, this is
$\langle u,Pv\rangle=0$ for every $u$, and therefore $Pv=0$.
The positive definiteness of $P$ forces $v=0$.  Thus $Q$ is symmetric
positive semidefinite with trivial kernel, which proves $Q>0$.

We next prove the identity for arbitrary $Q>0$, independently of the
block-matrix inequality.  Symmetry of $P$, $Q$ and $Q^{-1}$ gives
\begin{align*}
 &\langle Q(v-Q^{-1}Pu),v-Q^{-1}Pu\rangle\\
 &\quad=\langle Qv,v\rangle-\langle Qv,Q^{-1}Pu\rangle
        -\langle Pu,v\rangle+\langle Pu,Q^{-1}Pu\rangle\\
 &\quad=\langle Qv,v\rangle-2\langle Pu,v\rangle
        +\langle PQ^{-1}Pu,u\rangle.
\end{align*}
Here $\langle Qv,Q^{-1}Pu\rangle=\langle v,Pu\rangle$ and
$\langle Pu,Q^{-1}Pu\rangle=\langle u,PQ^{-1}Pu\rangle$;
no commutation of $P$ and $Q$ is used.  Adding
$\langle(C-PQ^{-1}P)u,u\rangle$ proves
\eqref{eq:block-minimum}.

For each fixed $u$, the last term in \eqref{eq:block-minimum} is
nonnegative and, since $Q>0$, vanishes exactly when $v=Q^{-1}Pu$.
Consequently,
\[
 \min_{v\in\R^n}
 \bigl\{\langle Cu,u\rangle-2\langle Pu,v\rangle
                 +\langle Qv,v\rangle\bigr\}
 =\langle(C-PQ^{-1}P)u,u\rangle,
\]
and the minimizer is unique.  Under the block-matrix inequality, the
expression being minimized is nonnegative.  Its minimum is therefore
nonnegative for every $u$, proving $C-PQ^{-1}P\geq0$.
\end{proof}

We can now compare the finite-shift condition with the familiar
convexity assumptions.

\begin{proposition}[Comparison of the structural conditions]\label{prop:hierarchy}
Assume in this proposition that $\Gamma$ is convex and contains the positive
definite cone.  All other assumptions on $F$ are as in Section~\ref{sec:introduction}.
\begin{enumerate}[label=\textup{(\roman*)}]
\item If $F$ is convex, then it satisfies $\mathcal S_b$ for every $b\geq0$.
\item The condition $\mathcal S_0$, restricted to positive definite
matrices, is equivalent to ordinary inverse-concavity, namely concavity of
\[
 F_*(R)=\frac1{F(R^{-1})},\qquad R>0.
\]
\item Ordinary inverse-concavity does not imply $\mathcal S_b$ for any
fixed $b>0$.
\item If the same speed satisfies $\mathcal S_b$ for every $b>0$, then it is
convex.
\end{enumerate}
\end{proposition}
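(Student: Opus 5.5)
The four parts all reduce to the two reformulations of $\mathcal E_z$ in Lemma~\ref{lem:shift-identities}: the Bregman form \eqref{eq:Ez-bregman} for (i), the inverse-dual inequality of Lemma~\ref{lem:inverse-support} for (ii), and the local second-order condition \eqref{eq:local-shift-condition} for (iii) and (iv).

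\emph{Part (i).} Since $\Gamma$ is convex and $F$ is convex, the supporting-plane inequality gives $F(B)-F(A)-\dot F^{ij}(A)W_{ij}\geq0$ for $W=B-A$. Because $Q>0$, the proof of Lemma~\ref{lem:shift-identities} shows $WQ^{-1}W\geq0$. Since $\dot F(A)>0$, the term $\dot F^{ij}(A)(WQ^{-1}W)_{ij}$ is also nonnegative. Hence $\mathcal E_z(A,B)\geq0$ for every admissible shift, and in particular for every $b$.

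\emph{Part (ii).} For $A,B>0$ the shift $z=0$ is admissible. By \eqref{eq:shift-monotonicity}, $\mathcal E_z\geq\mathcal E_0$ for every admissible $z\geq0$: the positivity of $A-z'\Id$ and $B-z'\Id$ persists for $z'\in[0,z]$. So $\mathcal S_0$ on the positive cone is equivalent to $\mathcal E_0(A,B)\geq0$ for all $A,B>0$. By Euler's identity $\dot F^{ij}(A)A_{ij}=F(A)$, this reads
\[
F(B)+\dot F^{ij}(A)(AB^{-1}A)_{ij}\geq2F(A).
\]
If $F_*$ is concave, Lemma~\ref{lem:inverse-support} gives $\dot F^{ij}(A)(AB^{-1}A)_{ij}\geq F(A)^2/F(B)$, and AM--GM then yields the displayed inequality. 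Conversely, apply the displayed inequality to $tB$ for $t>0$:
\[
tF(B)+t^{-1}\dot F^{ij}(A)(AB^{-1}A)_{ij}\geq2F(A).
\]
Minimizing the left side over $t$ gives $2\sqrt{F(B)\,\dot F^{ij}(A)(AB^{-1}A)_{ij}}\geq2F(A)$, which is \eqref{eq:inverse-concavity-support}. Lemma~\ref{lem:inverse-support} then gives concavity of $F_*$.

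\emph{Part (iv).} Fix $A\in\Gamma$ and $C\in\Sym(n)$. For every $b$, the shift $z=-bF(A)$ is admissible for the pair $(A,A)$, with $P=A+bF(A)\Id>0$. Lemma~\ref{lem:shift-identities} then gives
\[
\ddot F^{ij,kl}(A)C_{ij}C_{kl}\geq-2\dot F^{ij}(A)(CP^{-1}C)_{ij}.
\]
Since $P^{-1}\leq(bF(A))^{-1}\Id$, the right side is $O(1/b)$. Letting $b\to\infty$ gives $\ddot F(A)(C,C)\geq0$. Thus $F$ has positive semidefinite Hessian on the convex open set $\Gamma$, so $F$ is convex.

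\emph{Part (iii).} This needs an explicit counterexample, and is the only real obstacle: one must check a concrete second-order computation near the boundary of the cone. I would take $\Gamma=\Gplus$, which is convex, and $F=(\det A)^{1/n}$, which is smooth, homogeneous of degree one, strictly elliptic, and inverse-concave. For simplicity take $n=2$, so $f=\sqrt{\kappa_1\kappa_2}$; the general case is analogous. Fix $b>0$, let $A=\operatorname{diag}(1,\delta)$ and $C=e_2e_2^{\mathsf T}$, and use \eqref{eq:local-shift-condition} with $z=-bF(A)=-b\sqrt\delta$. Then
\[
\ddot F(A)(C,C)=-\tfrac14\delta^{-3/2},\qquad
2\dot F^{ij}(A)(CP^{-1}C)_{ij}=\frac{\delta^{-1/2}}{\delta+b\sqrt\delta}\approx\frac{\delta^{-1}}{b}.
\]
For small $\delta$ the first term dominates, so the second derivative in \eqref{eq:local-shift-condition} is negative. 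Lemma~\ref{lem:shift-identities} then shows that $\mathcal S_b$ fails. For $n>2$, use $A=\operatorname{diag}(1,\ldots,1,\delta)$ with the same $C$; the same scaling in $\delta$ should go through.
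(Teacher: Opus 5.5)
Your proposal is correct. Parts (i) and (ii) match the paper. In the converse of (ii) you optimize the scaling $t$ of $B$, whereas the paper simply takes $t=F(A)/F(B)$; the conclusion is the same. Parts (iii) and (iv) take different routes.

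In (iv) the paper argues globally. For arbitrary $A,B\in\Gamma$ it applies $\mathcal S_{s/F(A)}$ with $z=-s$ in \eqref{eq:Ez-bregman} and lets $s\to\infty$. This yields the supporting-plane inequality $F(B)\geq F(A)+\dot F^{ij}(A)(B-A)_{ij}$ using first derivatives only. You push the same limit through the local condition \eqref{eq:local-shift-condition} and get $\ddot F(A)\geq0$ pointwise. That is equally valid, since $F$ is smooth. Two small fixes are needed there:
\begin{itemize}
\item $P=A+bF(A)\Id>0$ holds only for $b$ sufficiently large, because matrices in $\Gamma$ may have negative eigenvalues.
\item The correct bound is $P^{-1}\leq(\lambda_{\min}(A)+bF(A))^{-1}\Id$, not $(bF(A))^{-1}\Id$. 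It is still $O(1/b)$.
\end{itemize}

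In (iii) the paper uses the harmonic mean at the single matrix $A=\Id$. There the second variation $-\frac4{n^2}+\frac4{n^2(1+b/n)}$ is negative for every $b>0$ and every $n$ at once. Your geometric-mean example is also valid, but it genuinely needs the degeneration toward $\partial\Gplus$, with $\delta$ depending on $b$. Indeed, at $A=\Id$ the local condition for $(\det A)^{1/n}$ equals $\frac{1-b}{1+b}\frac{\tr C^2}{n}+\frac{(\tr C)^2}{n^2}$. This is nonnegative in every direction when $b\leq1$.

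The general-$n$ case that you left as ``should go through'' does work, and you should write it out. Take $A=\operatorname{diag}(1,\dots,1,\delta)$, $C=e_ne_n^{\mathsf T}$ and $z=-b\delta^{1/n}$. The second variation is then
\[
 \delta^{1/n-2}\Bigl(-\frac{n-1}{n^2}+\frac2n\cdot
 \frac{\delta^{1-1/n}}{\delta^{1-1/n}+b}\Bigr),
\]
which is negative once $\delta$ is small.
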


\begin{proof}
For (i), let $P=A-z\Id>0$, $Q=B-z\Id>0$ and $W=B-A$.
Convexity gives $F(B)-F(A)-\dot F^{ij}(A)W_{ij}\geq0$.
The remaining term in \eqref{eq:Ez-bregman} is nonnegative because
$\dot F^{ij}(A)>0$ and $WQ^{-1}W\geq0$.  Thus $\mathcal E_z\geq0$.

For (ii), suppose first that $F_*$ is concave.  Euler's identity and
Lemma~\ref{lem:inverse-support} give
\begin{align*}
 \mathcal E_0(A,B)
 &=F(B)-2F(A)+\dot F^{ij}(A)(AB^{-1}A)_{ij}\\
 &\geq F(B)-2F(A)+\frac{F(A)^2}{F(B)}
 =\frac{(F(B)-F(A))^2}{F(B)}\geq0.
\end{align*}
For every allowed $z\geq0$, \eqref{eq:shift-monotonicity} gives
$\mathcal E_z(A,B)\geq\mathcal E_0(A,B)\geq0$.
Conversely, apply $\mathcal S_0$ at zero shift to $A$ and
$\frac{F(A)}{F(B)}B$.  Homogeneity gives
\[
 0\leq\mathcal E_0\left(A,\frac{F(A)}{F(B)}B\right)
 =-F(A)+\frac{F(B)}{F(A)}\dot F^{ij}(A)(AB^{-1}A)_{ij}.
\]
This is \eqref{eq:inverse-concavity-support}, which implies concavity of
$F_*$ by Lemma~\ref{lem:inverse-support}.

For (iii), in any dimension $n\geq2$, take the harmonic-mean speed
\[
 F(A)=\bigl(\tr A^{-1}\bigr)^{-1},\qquad A>0.
\]
It is positive and homogeneous of degree one, and
$\dot F^{ij}(A)=F(A)^2(A^{-2})_{ij}$ is positive definite.
Its inverse dual is $F_*(R)=\tr R$, so it is inverse-concave.
At $A=\Id$, choose $C=\operatorname{diag}(1,-1,0,\ldots,0)$.
Then
\[
 F(A)=\frac1n,\qquad
 \dot F^{ij}(A)=\frac1{n^2}\delta_{ij},
\]
while
\[
 F(A+sC)
 =\left(n-2+\frac1{1+s}+\frac1{1-s}\right)^{-1}
 =\frac1n-\frac2{n^2}s^2+O(s^4).
\]
Thus $\ddot F^{ij,kl}(A)C_{ij}C_{kl}=-4/n^2$.
For any $b>0$, choose $z=-bF(A)=-b/n$, so
$P=(1+b/n)\Id$.  By Lemma~\ref{lem:shift-identities}, $\mathcal S_b$ would require
the expression in \eqref{eq:local-shift-condition} to be nonnegative,
but here it equals
\[
 -\frac4{n^2}+\frac4{n^2(1+b/n)}<0.
\]
This contradicts the necessary local condition and proves (iii).

For (iv), fix arbitrary $A,B\in\Gamma$ and put $W=B-A$.  For every
sufficiently large $s>0$, both $A+s\Id$ and $B+s\Id$ are positive
definite.  We may therefore apply $\mathcal S_{s/F(A)}$ with $z=-s$.
Using \eqref{eq:Ez-bregman}, we obtain
\[
 F(B)-F(A)-\dot F^{ij}(A)W_{ij}
 +\dot F^{ij}(A)\bigl[W(B+s\Id)^{-1}W\bigr]_{ij}\geq0.
\]
The last term tends to zero as $s\to\infty$, and hence
\[
 F(B)\geq F(A)+\dot F^{ij}(A)(B-A)_{ij}
 \qquad(A,B\in\Gamma).
\]
On the convex domain $\Gamma$, these global supporting-plane inequalities
imply that $F$ is convex.  This proves (iv).

\end{proof}

\subsection{The diagonal limit of the two-point function}

We make precise the limiting behaviour of \eqref{eq:kxy} at the diagonal,
as in \cite[Section~12.1]{ACGL}.  Fix a reference metric $g_0$ on $M$.
Replace each point $(x,x)$ of the diagonal in $M\times M$ by the sphere
of $g_0$-unit tangent directions at $x$.  Near this new boundary use
coordinates
\[
 (x,v,s)\longmapsto(x,\exp_x^{g_0}(sv)),
 \qquad |v|_{g_0}=1,\quad 0<s<\delta,
\]
where $\delta$ is smaller than the injectivity radius of $g_0$.
Adjoining $s=0$ gives the real blow-up $\widehat{M\times M}$, a compact
manifold with boundary $S_{g_0}M$, fixed in time.

For $y=\exp_x^{g_0}(sv)$, Taylor expansion gives
\begin{align*}
 2\inner{X(x,t)-X(y,t)}{\nu(x,t)}
 &=s^2h_t(v,v)+O(s^3),\\
 \norm{X(x,t)-X(y,t)}^2
 &=s^2g_t(v,v)+O(s^3).
\end{align*}
Indeed, the first normal derivative vanishes, and
\eqref{eq:gauss-weingarten} gives the normal acceleration
$-h_t(v,v)\nu(x,t)$.  The tangential acceleration caused by using $g_0$
instead of $g_t$ does not contribute to the numerator.  Hence the
continuous boundary value is
\begin{equation}\label{eq:diagonal-limit}
 k(x,v,t)=\frac{h_t(v,v)}{g_t(v,v)}
 \qquad ((x,v)\in S_{g_0}M).
\end{equation}
The remainders are uniform on compact time intervals, where $g_t(v,v)$
is bounded away from zero.  Thus the extension is jointly continuous in
space and time.  At fixed $x$, its boundary values range from
$\kappa_1(x,t)$ to $\kappa_n(x,t)$, so $\ka(x,t)\leq\kappa_1(x,t)$
and the infimum defining $\ka$ is a minimum on the compactified fibre.
Consequently every global extremum used in the first-contact argument
is attained on a fixed compact space, either off the diagonal or at a
boundary direction.

\section{Exterior noncollapsing}\label{sec:geometry}

We prove Theorem~\ref{thm:general}, following the optimized two-point
argument of Andrews and Langford~\cite{AL16} and retaining the details
needed when the principal curvatures need not be positive.
The diagonal compactification was
constructed in Section~\ref{sec:preliminaries}; it remains to control
off-diagonal contact points.

\begin{lemma}[two-point evolution inequality]\label{lem:two-point}
Let $0<t_0<T$, and let $\phi$ be a smooth lower parabolic support for
$\ka$ at $(x_0,t_0)$:
in a backward space--time neighbourhood, assume
\(\phi(x,t)\leq\ka(x,t)\), with equality at \((x_0,t_0)\).
Suppose that this equality is realized by $y_0\ne x_0$ and that
\[
 z:=\phi(x_0,t_0)=k(x_0,y_0,t_0)
 <\min\{0,\lambda_{\min}(A)\},
 \qquad A=A(x_0,t_0).
\]
Let $B$ be the Weingarten map at $y_0$, transported to $T_{x_0}M$ by the
reflection described in the proof.  Then $Q=B-z\Id$ is positive definite,
and
\begin{equation}\label{eq:two-point-main}
 (\partial_t-\Delta_F)\phi\geq
 z\,|A|_F^2+
 \frac{2}{d^2}\mathcal E_z(A,B)+
 \frac{8}{d^4}\sum_{i=1}^n \dot f_i(\kappa_i-z)\xi_i^2,
\end{equation}
where
\[
 d=\norm{X(x_0)-X(y_0)},\quad
 \xi_i=\inner{X(x_0)-X(y_0)}{e_i},
\]
and $\{e_i\}$ is an eigenbasis of $A$ with eigenvalues $\kappa_i$ and
$\dot f_i=\partial f/\partial\kappa_i$.
\end{lemma}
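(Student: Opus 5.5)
The plan is to follow the optimized two-point computation of Andrews and Langford~\cite[Section~2]{AL16}, working at the off-diagonal pair $(x_0,y_0)$ and keeping track of signs, since neither $A$ nor $B$ need be positive.

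\emph{Setup and reflection.} Write $w=X(x_0)-X(y_0)$, $d=|w|$, $\omega=w/d$. Since $z=k(x_0,y_0,t_0)=2\langle w,\nu_x\rangle/d^2$ is the minimum over $y$ of $k(x_0,\cdot,t_0)$, the first variation in $y$ vanishes. This forces $\nu_y$ to be the reflection of $\nu_x$ in the hyperplane $\omega^\perp$, that is, $\nu_y=\nu_x-2\langle\nu_x,\omega\rangle\omega$. The reflection $\Refl=\Id-2\omega\omega^{\mathsf T}$ then maps $T_{x_0}M$ to $T_{y_0}M$, and we use it to transport $B$ to $T_{x_0}M$. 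Set $\partial_i^y=\Refl e_i$.

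\emph{Second variation and positivity of $Q$.} Consider the function $Z(x,y,t)=k(x,y,t)-\phi(x,t)$. It is $\geq0$ near $(x_0,y_0,t_0)$ (backward in time) and vanishes there. Compute its mixed second derivatives in the directions $(u,v)\in T_{x_0}M\oplus T_{x_0}M$, moving $x$ along $u$ and $y$ along $\Refl v$. Standard computations, as in~\cite{AL16}, give:
\begin{itemize}
\item the $yy$ block equal to $\tfrac{2}{d^2}(B-z\Id)$ plus gradient terms that vanish by the first-order conditions;
\item the $xy$ block equal to $-\tfrac{2}{d^2}(\Id\cdot\text{stuff})$, which after using the first-order conditions becomes $-\tfrac{2}{d^2}P$ with $P=A-z\Id$;
\item the $xx$ block involving $\nabla^2 k$ minus $\nabla^2\phi$.
\end{itemize}
Here $P>0$ because $z<\lambda_{\min}(A)$. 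The Hessian of $Z$ is nonnegative, so Lemma~\ref{lem:block-minimum}, with $C$ the appropriate $xx$ block, yields $Q>0$ and $C-PQ^{-1}P\geq0$. This is exactly where the hypothesis $z<\lambda_{\min}(A)$ enters. The hypothesis $z<0$ guarantees that the contact is genuinely off the diagonal and is used in the gradient term below.

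\emph{Time derivative and elliptic part.} Compute $\partial_t k$ at fixed $(x,y)$ using Lemma~\ref{lem:basic-evolution}:
\[
\partial_t k=\frac{2}{d^2}\bigl(-F(A)+F(B)\bigr)\langle\cdot\rangle\text{-terms}+\frac{2}{d^2}\langle w,\nabla F(x)\rangle+z\cdot\frac{4}{d^2}(\ldots).
\]
In practice this is $\tfrac{2}{d^2}\bigl(F(B)-F(A)\bigr)+zF\text{-terms}$, plus a gradient term $\tfrac{2}{d^2}\langle w,\nabla F\rangle$. Then form the operator
\[
\mathcal L=\sum_{i,j}\dot F^{ij}(A)\bigl(\partial_{x_i}\partial_{x_j}+\ldots\bigr).
\]
Precisely, evaluate $\dot F^{ij}(A)$ contracted against the Hessian quadratic form at $(u,v)=(e_i,Q^{-1}Pe_i)$, summed. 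By~\eqref{eq:block-minimum} this equals the trace of $\dot F$ against $C-PQ^{-1}P$, which is nonnegative.

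\emph{Collecting terms.} Expanding that inequality expresses $\Delta_F\phi$ as bounded above by $\dot F^{ij}$-contractions of the second derivatives of $k$. Combined with $\partial_t\phi\geq\partial_t k$ at the support point (valid because the support is backward), this produces several contributions:
\begin{itemize}
\item a term $z|A|_F^2$, coming from the $xx$ second fundamental form terms and the $FA^2$ part of the metric evolution;
\item a term $\tfrac{2}{d^2}\bigl[F(B)-F(A)+\dot F^{ij}(PQ^{-1}P-P)_{ij}\bigr]=\tfrac{2}{d^2}\mathcal E_z(A,B)$, where the Codazzi identity cancels the third-derivative terms of $\nabla h$ against the gradient of $F$ via $\nabla_iF=\dot F^{kl}\nabla_ih_{kl}$ and the first-order condition $\nabla\phi=\nabla_x k$;
\item a remainder $\tfrac{8}{d^4}\sum_i\dot f_i(\kappa_i-z)\xi_i^2$, arising from the cross terms $\langle w,e_i\rangle^2$ in the $xx$ block after the first-order identity $\partial_{x_i}k=\tfrac{2}{d^2}\bigl(\langle w,Ae_i\rangle\ldots-z\langle w,e_i\rangle\bigr)$ is substituted.
\end{itemize}

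\emph{Main obstacle.} The hard step is the bookkeeping in the $xx$ and $xy$ blocks. One must verify that all terms involving $\nabla\phi$, the tangential components $\xi_i$, and the third derivatives combine into exactly $z|A|_F^2$ plus the nonnegative $\xi$-remainder, with no sign assumption on $A$ or $B$. The first attempt would be to reproduce the computation of~\cite[Proposition~2.3]{AL16} line by line. Positivity of $\kappa_i$ was used there only through $P>0$, which is now supplied by $z<\lambda_{\min}(A)$.
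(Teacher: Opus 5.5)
There is a genuine gap at the first step. The vanishing of the first variation of $k(x_0,\cdot,t_0)$ at $y_0$ only shows that $\nu_x-z\xi$ is normal to $M$ at $y_0$. It is a unit vector because $zd^2=2\inner{\xi}{\nu_x}$, so you obtain $\nu_y=\pm(\nu_x-z\xi)=\pm\Refl\nu_x$, with the sign undetermined. The sign cannot be read off locally either. Near $y_0$ the hypersurface merely lies outside the sphere of radius $r=-1/z$ and touches it, and this local picture is compatible with either choice of outward side.

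The paper fixes the sign globally. Put $c=X(x_0)+r\nu_x$. The bound $k(x_0,\cdot,t_0)\geq z$ gives $\norm{X(y)-c}^2-r^2=\norm{X(x_0)-X(y)}^2(1-k(x_0,y,t_0)/z)\geq0$, so the open ball $B_r(c)$ misses $X(M)$. This ball contains the exterior points $X(x_0)+\delta\nu_x$, so by connectedness and embeddedness it lies in the exterior region. Hence the outward normal at $y_0$ is the inward direction of the ball, $(c-X(y_0))/r=\nu_x-z\xi$. This is exactly where $z<0$ and embeddedness are used. Your proposal uses embeddedness nowhere, and it assigns $z<0$ to off-diagonality (which is a hypothesis) and to a gradient term (where $z<0$ plays no role).

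The step is not cosmetic. With the opposite sign, the $yy$ block would be $-B-z\Id$ rather than $Q=B-z\Id$, so Lemma~\ref{lem:block-minimum} would give positivity of the wrong matrix, and $B\in\Gamma$ with $F(B)=F(y_0,t_0)$ would no longer enter. Also $\partial_tk$ would contain $\frac{2}{d^2}(-F(A)-F(y_0,t_0))$ instead of $\frac{2}{d^2}(F(B)-F(A))$, so $\mathcal E_z(A,B)$ would never appear. Deferring to \cite{AL16} does not cover this: their hypersurfaces are convex, whereas here the touching ball sits on the outward side of a nonconvex hypersurface.

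Apart from this, your skeleton is the paper's: paired variations $(u,\Refl v)$, mixed block $-P$, $yy$ block $Q$, Lemma~\ref{lem:block-minimum} for $Q>0$, and test vectors $v_i=Q^{-1}Pe_i$. However, the bookkeeping you flag as the main obstacle is left undone and partly misattributed.
\begin{itemize}
\item The term $z|A|_F^2$ does not come from the metric evolution, since $k$ is differentiated in ambient terms at fixed $(x,y)$. It comes from the normal part $-\norm{Au}^2\nu_x$ of $\nu_x''$, which contributes $-z\norm{Au}^2$ to $\nabla^2k$.
\item The $z^2F(A)$ in $\partial_tk$ is cancelled by the contracted $z^2A(e_i,e_i)$ via Euler's identity.
\item The gradient term $\frac{2}{d^2}\inner{\xi}{\nabla F}$ is cancelled by Codazzi alone; no use of $\nabla\phi=\nabla_xk$ is needed.
\item The reflection corrections $\frac{8}{d^4}\inner{\xi}{Pu}\inner{\xi}{v}$ hidden in $\inner{Au}{\Refl v}$ and $\norm{u-\Refl v}^2$ are cancelled by the $\inner{\xi}{v}$-part of $-2(d^2)'k'$. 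This cancellation is what makes the mixed block exactly $-P$ and leaves precisely the remainder $\frac{8}{d^4}\sum_i\dot f_i(\kappa_i-z)\xi_i^2$.
\end{itemize}
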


\begin{proof}
Suppress the time $t_0$, write $\nu_x=\nu(x_0,t_0)$ and
$\nu_y=\nu(y_0,t_0)$, and set $\xi=X(x_0)-X(y_0)$, $d=\norm\xi$.
Spatial derivatives of $F$ below are evaluated at $x_0$.
Since $y_0$ minimizes $k(x_0,\cdot,t_0)$, differentiation in the $y$
variable shows that $\nu_x-z\xi$ is normal at $y_0$.  Moreover
\[
 \norm{\nu_x-z\xi}^2
 =1-2z\inner{\nu_x}{\xi}+z^2d^2=1.
\]
The sign of this normal is fixed by the global support and embeddedness.
Indeed, put \(r=-1/z>0\) and \(c=X(x_0)+r\nu_x\).  Since
\(k(x_0,y,t_0)\geq z\) for every \(y\ne x_0\), one has
\[
 \norm{X(y,t_0)-c}^2-r^2
 =\norm{X(x_0,t_0)-X(y,t_0)}^2
 \left(1-\frac{k(x_0,y,t_0)}{z}\right)\geq0.
\]
For all sufficiently small $\delta>0$, the point
$X(x_0,t_0)+\delta\nu_x$ lies both in this open ball and in the exterior
of the region bounded by $X(M,t_0)$.  The ball is connected and disjoint
from the hypersurface, so it lies entirely in that exterior.
At the second contact \(y_0\), the unit vector pointing into that exterior
ball is
\[
 \frac{c-X(y_0)}r=\nu_x-z\xi.
\]
It is therefore the outward normal \(\nu_y\), and hence
\begin{equation}\label{eq:normal-reflection}
 \nu_y=\nu_x-z\xi.
\end{equation}
Equivalently, the Euclidean reflection
\[
 \Refl=\Id-2\xi\otimes\xi/d^2
\]
sends $\nu_x$ to $\nu_y$ and $T_{x_0}M$ isometrically to $T_{y_0}M$.
We use $\Refl$ to identify the two tangent spaces, and write $B$ for
the pullback of the Weingarten map at $y_0$.  Orthogonal invariance
ensures that $B\in\Gamma$ and $F(B)=F(y_0,t_0)$.

We next compute the derivatives needed at this contact pair.  Put
$P=A-z\Id$.  For $u,v\in T_{x_0}M$, direct differentiation gives
\begin{equation}\label{eq:first-k}
 \nabla^x_u k=\frac{2}{d^2}\inner{\xi}{Pu},
 \qquad \nabla^y_{\Refl v}k=0.
\end{equation}
To obtain the second derivative explicitly, take intrinsic geodesics
with initial velocities $u$ and $\Refl v$ at the two points, and let
primes denote derivatives along this paired variation at the contact.
The Gauss--Weingarten identities give
\begin{align*}
 \xi'&=u-\Refl v,
 &\xi''&=-A(u,u)\nu_x+B(v,v)\nu_y,\\
 \nu_x'&=Au,
 &\nu_x''&=(\nabla_uA)u-\norm{Au}^2\nu_x.
\end{align*}
Differentiating $d^2k=2\inner\xi{\nu_x}$ twice yields
\[
 d^2 k''=2\inner{\xi''}{\nu_x}
 +4\inner{\xi'}{\nu_x'}+2\inner\xi{\nu_x''}
 -z(d^2)''-2(d^2)'k',
\]
where $(d^2)'=2\inner\xi{\xi'}$,
$(d^2)''=2\norm{\xi'}^2+2\inner\xi{\xi''}$, and
$k'=2d^{-2}\inner\xi{Pu}$ by \eqref{eq:first-k}.
Substituting these expressions and \eqref{eq:normal-reflection} gives
\begin{align}
 \nabla^2 k[(u,\Refl v),(u,\Refl v)]
={}&\frac{2}{d^2}\left\{
 A(u,u)-2A(u,v)+B(v,v)-z\norm{u-v}^2\right\}\notag\\
&+z^2A(u,u)-z\norm{Au}^2
 +\frac{2}{d^2}\inner{\xi}{(\nabla_uA)u}\notag\\
&-\frac{8}{d^4}\inner{\xi}{u}\inner{\xi}{Pu}.
 \label{eq:second-k}
\end{align}
In \eqref{eq:second-k}, vectors at $y_0$ are written as their pullbacks
to $T_{x_0}M$ under $\Refl$: we have abbreviated $\inner{Au}{\Refl v}$
by $A(u,v)$ and $\norm{u-\Refl v}^2$ by $\norm{u-v}^2$.  Two reflection
corrections are hidden in this abbreviation.  Since
$\Refl=\Id-2\xi\otimes\xi/d^2$ is symmetric, every ambient vector $w$
satisfies
$\inner{w}{\Refl v}=\inner{w}{v}-2d^{-2}\inner{\xi}{w}\inner{\xi}{v}$,
and hence
\[
 \inner{Au}{\Refl v}
 =A(u,v)-\frac{2}{d^2}\inner{\xi}{Au}\inner{\xi}{v},
 \qquad
 \norm{u-\Refl v}^2
 =\norm{u-v}^2+\frac{4}{d^2}\inner{\xi}{u}\inner{\xi}{v}.
\]
After multiplication by $2/d^2$, these corrections contribute
$\frac{8}{d^4}\inner{\xi}{Pu}\inner{\xi}{v}$ to $\nabla^2k$.  They are
exactly cancelled by the last term of \eqref{eq:second-k}: since
$\inner{\xi}{\Refl v}=\inner{\Refl\xi}{v}=-\inner{\xi}{v}$, one has
\[
 -2(d^2)'k'
 =-\frac{8}{d^4}\bigl(\inner{\xi}{u}+\inner{\xi}{v}\bigr)
 \inner{\xi}{Pu},
\]
whose $\inner{\xi}{v}$-part is the negative of the correction above.
Similarly, differentiating $d^2k=2\inner\xi{\nu_x}$ in time, using
\[
 \partial_t\xi=-F(A)\nu_x+F(B)\nu_y,
 \qquad \partial_t\nu_x=\nabla F(A)
\]
yields
\begin{equation}\label{eq:time-k}
 \partial_t k
 =\frac{2}{d^2}\left(F(B)-F(A)
 +\inner{\xi}{\nabla F(A)}\right)+z^2F(A).
\end{equation}

We now use the spatial minimum of $k-\phi$ to justify the inverse
of $Q=B-z\Id$.  The first line of \eqref{eq:second-k} is $2/d^2$ times
\[
 P(u,u)-2P(u,v)+Q(v,v),
\]
and all remaining terms depend only on $u$, as does the Hessian of
$\phi$.  Thus, after multiplication by $d^2/2$, the nonnegative
Hessian of $k-\phi$ has mixed block $-P$ and $yy$ block $Q$.
Since $P>0$, Lemma~\ref{lem:block-minimum} gives $Q>0$.

We may therefore minimize the displayed quadratic form in $v$.
By \eqref{eq:block-minimum}, its minimum is
$P(u,u)-(PQ^{-1}P)(u,u)$, attained at $v=Q^{-1}Pu$.
Choose a principal frame $\{e_i\}$ and set
\[
 v_i=Q^{-1}Pe_i.
\]
Since $k-\phi$ has a spatial minimum and a backwards space-time minimum at
the contact point,
\[
 \phi_t\geq k_t,\qquad
 \sum_i \dot f_i\nabla^2 k[(e_i,\Refl v_i),(e_i,\Refl v_i)]
 \geq \Delta_F\phi.
\]
The first line of \eqref{eq:second-k}, after contraction, is
\[
 \frac{2}{d^2}\dot F^{ij}(A)(P-PQ^{-1}P)_{ij}.
\]
Euler homogeneity, $\dot F^{ij}(A)A_{ij}=F(A)$, gives
\[
 \sum_i \dot f_i\bigl(z^2A(e_i,e_i)-z\norm{Ae_i}^2\bigr)
 =-z|A|_F^2+z^2F(A).
\]
Codazzi gives
\[
 \sum_i \dot f_i\inner{\xi}{(\nabla_{e_i}A)e_i}
 =\inner{\xi}{\nabla F(A)}.
\]
Finally, the last line of \eqref{eq:second-k} contracts to
\[
 -\frac{8}{d^4}\sum_i \dot f_i(\kappa_i-z)\xi_i^2.
\]
Subtracting the contracted second variation from \eqref{eq:time-k}
cancels both the gradient term and $z^2F(A)$, and yields exactly
\eqref{eq:two-point-main}.
\end{proof}

\begin{proof}[Proof of Theorem~\ref{thm:general}]
Fix $\tau<T$ and choose $0<\eta<(b-a)e^{-\tau}$, so that
$a+\eta e^t<b$ on $[0,\tau]$.
Consider on the blown-up product the function
\[
 \Psi(x,y,t)=k(x,y,t)+(a+\eta e^t)F(x,t).
\]
At $t=0$, \eqref{eq:initial-nc} and $F>0$ give $\Psi>0$.  On the boundary
of the blow-up, \eqref{eq:diagonal-limit} and
\eqref{eq:cone-gap-general} give
\[
 \Psi(x,v,t)=\frac{h_t(v,v)}{g_t(v,v)}+(a+\eta e^t)F(x,t)
 \geq\lambda_{\min}(A)+aF(A)+\eta e^tF(A)>0.
\]
Thus, if $\Psi$ first reached zero, it would do so at an off-diagonal pair
$(x_0,y_0,t_0)$ with $t_0\leq\tau$.

At such a point
\[
 \phi(x,t)=-(a+\eta e^t)F(x,t)
\]
is a lower parabolic support for $\ka$, and
\[
 z=\phi(x_0,t_0)=-(a+\eta e^{t_0})F(A)<0,
 \qquad z
 \geq-bF(A).
\]
The boundary estimate also gives
$\lambda_{\min}(P)\geq\eta e^{t_0}F(A)>0$ for $P=A-z\Id$.
In particular, $z<\lambda_{\min}(A)$, so
Lemma~\ref{lem:two-point} applies and gives $Q>0$.
The shift condition $\mathcal S_b$ is therefore applicable to $A,B,z$.
Every term after the first on the
right-hand side of \eqref{eq:two-point-main} is nonnegative.  On the other
hand, Lemma~\ref{lem:basic-evolution} gives
\[
 (\partial_t-\Delta_F)\phi
 =z\,|A|_F^2-\eta e^{t_0}F.
\]
Comparison with \eqref{eq:two-point-main} yields
\[
 -\eta e^{t_0}F\geq0,
\]
contradicting $\eta>0$ and $F>0$.
Consequently $\Psi>0$ on $[0,\tau]$.  Letting $\eta\downarrow0$ gives
$k(x,y,t)+aF(x,t)\geq0$ for all $x\ne y$ and $t\leq\tau$.  Since
$\tau<T$ was arbitrary, \eqref{eq:preserved-nc} follows.
\end{proof}

\section{The explicit non-convex speed}
\label{sec:explicit-speed}

Throughout this section, the parameters and speed are as in
Theorem~\ref{thm:explicit}.  We first compute the nonlinear part of the
speed, then verify its local properties and global finite-shift bound.
After proving preservation of the cone, we apply the exterior theorem
and deduce the two-sided estimate.

The following lemma gives the derivatives and exact first-order
remainder of the quadratic quotient in the speed.

\begin{lemma}[A quadratic quotient]\label{lem:quadratic-quotient}
On the half-space $\tr A>0$, define
\[
 q(A)=\frac{\norm{A^\circ}^2}{\tr A},\qquad
 h_A=\tr A,\qquad U_A=\frac{A^\circ}{h_A}.
\]
Then, for every $C\in\Sym(n)$,
\begin{align}
 \dot q^{ij}(A)C_{ij}
 &=2\inner{U_A}{C^\circ}-\norm{U_A}^2\tr C,
 \label{eq:quotient-first}\\
 \ddot q^{ij,kl}(A)C_{ij}C_{kl}
 &=\frac2{h_A}\norm{C^\circ-(\tr C)U_A}^2.
 \label{eq:quotient-second}
\end{align}
For any $A,B$ with positive trace, set
$h_B=\tr B$ and $U_B=B^\circ/h_B$.  Then
\begin{equation}\label{eq:perspective-bregman}
 q(B)-q(A)-\dot q^{ij}(A)(B-A)_{ij}
 =h_B\norm{U_B-U_A}^2.
\end{equation}
\end{lemma}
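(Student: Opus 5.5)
The plan is a direct computation that exploits the linearity of $A\mapsto A^\circ$ and the fact that $q$ is a perspective function. Write $q(A)=h_A\norm{U_A}^2$ with $U_A=A^\circ/h_A$, so $q$ is the perspective of the convex function $U\mapsto\norm{U}^2$. The trace-free projection is linear and self-adjoint for the Frobenius product, so $(A+sC)^\circ=A^\circ+sC^\circ$ and $\inner{A^\circ}{C}=\inner{A^\circ}{C^\circ}$. I would first prove \eqref{eq:perspective-bregman}, since the two derivative formulas then follow from it.

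\emph{First derivative.} Differentiate $q(A+sC)=\norm{A^\circ+sC^\circ}^2/(h_A+s\tr C)$ at $s=0$ using the quotient rule:
\[
 \frac{2\inner{A^\circ}{C^\circ}}{h_A}-\frac{\norm{A^\circ}^2\tr C}{h_A^2}
 =2\inner{U_A}{C^\circ}-\norm{U_A}^2\tr C .
\]
This is \eqref{eq:quotient-first}.

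\emph{Bregman identity.} Put $C=B-A$, so $C^\circ=B^\circ-A^\circ=h_BU_B-h_AU_A$ and $\tr C=h_B-h_A$. Then
\[
 \dot q(A)C=2h_B\inner{U_A}{U_B}-2h_A\norm{U_A}^2-(h_B-h_A)\norm{U_A}^2 .
\]
Subtracting this from $q(B)-q(A)=h_B\norm{U_B}^2-h_A\norm{U_A}^2$ cancels the $h_A$ terms and leaves
\[
 h_B\bigl(\norm{U_B}^2-2\inner{U_A}{U_B}+\norm{U_A}^2\bigr)=h_B\norm{U_B-U_A}^2 .
\]
This is \eqref{eq:perspective-bregman}, and it holds for arbitrary $A,B$ with positive trace.

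\emph{Second derivative.} Apply \eqref{eq:perspective-bregman} with $B=A+sC$ for small $|s|$. The left side is $\tfrac{s^2}{2}\ddot q(A)(C,C)+O(s^3)$, so it remains to expand the right side. Write $h_s=h_A+s\tr C$. Then
\[
 U_{A+sC}-U_A=\frac{A^\circ+sC^\circ}{h_s}-\frac{A^\circ}{h_A}
 =\frac{s\bigl(C^\circ-(\tr C)U_A\bigr)}{h_s}.
\]
Hence the right side equals $s^2\norm{C^\circ-(\tr C)U_A}^2/h_s$. Comparing the $s^2$ coefficients gives \eqref{eq:quotient-second}. Alternatively, one could differentiate \eqref{eq:quotient-first} once more directly; the expansion route is shorter.

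I expect no real obstacle. The only points to check are the sign bookkeeping in the Bregman expansion and the fact that the trace-free projection commutes with the inner product, i.e.\ $\inner{A^\circ}{C}=\inner{A^\circ}{C^\circ}$. The latter is what lets the formulas be written with $C^\circ$ throughout.
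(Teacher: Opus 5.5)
Your proposal is correct and follows the paper's proof for \eqref{eq:quotient-first} and \eqref{eq:perspective-bregman}: the quotient rule along $A+sC$, then direct algebra with $C=B-A$. The one difference is in \eqref{eq:quotient-second}. You read off the $s^2$ coefficient of the exact remainder $s^2\norm{C^\circ-(\tr C)U_A}^2/h_s$ given by the Bregman identity at $B=A+sC$, whereas the paper differentiates the quotient a second time; your route is equivalent and slightly cleaner, but your opening plan to prove the Bregman identity first is not what you carry out, and the order you actually follow (first derivative, then Bregman, then Hessian) is the coherent one, since the Bregman identity already uses $\dot q(A)$.
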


\begin{proof}
Write $c=\tr C$.  Along the line $A+sC$, we have
\[
 q(A+sC)
 =\frac{h_A^2\norm{U_A}^2+2s h_A\inner{U_A}{C^\circ}
              +s^2\norm{C^\circ}^2}{h_A+sc}.
\]
One differentiation gives \eqref{eq:quotient-first}; a second gives
\[
 \left.\frac{d^2}{ds^2}\right|_{s=0}q(A+sC)
 =\frac2{h_A}\left(\norm{C^\circ}^2
       -2c\inner{U_A}{C^\circ}+c^2\norm{U_A}^2\right),
\]
which is \eqref{eq:quotient-second}.  Finally, put $W=B-A$.
Since $W^\circ=h_BU_B-h_AU_A$ and $\tr W=h_B-h_A$,
\begin{align*}
 q(B)-q(A)-\dot q^{ij}(A)W_{ij}
 ={}&h_B\norm{U_B}^2-h_A\norm{U_A}^2
      -2\inner{U_A}{h_BU_B-h_AU_A}\\
    &+\norm{U_A}^2(h_B-h_A)
 =h_B\norm{U_B-U_A}^2.
\end{align*}
\end{proof}

With these identities in hand, we verify the local properties of the
speed and the curvature bound needed to exclude diagonal contact.

\begin{lemma}[ellipticity, concavity, and the cone gap]
\label{lem:speed-properties}
Let $F_{\varepsilon}$ and $\Gamma$ be given by
\eqref{eq:Gamma-explicit}--\eqref{eq:F-explicit}.  Under
\eqref{eq:epsilon-range}, $F_{\varepsilon}$ is positive, strictly elliptic,
concave and non-convex on $\Gamma$, and satisfies
\begin{equation}\label{eq:explicit-cone-gap}
 \lambda_{\min}(A)+aF_{\varepsilon}(A)>0
 \qquad(A\in\Gamma).
\end{equation}
\end{lemma}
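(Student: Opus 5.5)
The plan is to write $F_\varepsilon=\tr A-\varepsilon q(A)$ and use Lemma~\ref{lem:quadratic-quotient} throughout. Set $h=\tr A>0$ and $U=A^\circ/h$; on $\Gamma$ we have $\norm{U}<1$.

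\emph{Positivity.} We have $F_\varepsilon(A)=h(1-\varepsilon\norm{U}^2)>h(1-\varepsilon)>0$, since $\varepsilon<1$.

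\emph{Homogeneity.} Degree one is immediate, because $q$ is a ratio of a degree-two and a degree-one function.

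\emph{Ellipticity.} By \eqref{eq:quotient-first},
\[
 \dot F_\varepsilon^{ij}C_{ij}=(1+\varepsilon\norm{U}^2)\tr C-2\varepsilon\inner{U}{C^\circ}.
\]
For a unit vector $e$, take $C=e\otimes e$. Then $\tr C=1$ and $\inner{U}{C^\circ}=\inner{U}{C}=U(e,e)$, with $\norm{U(e,e)}\leq\norm{U}<1$. Hence
\[
 \dot F_\varepsilon(e\otimes e)\geq 1+\varepsilon\norm{U}^2-2\varepsilon\norm{U}=(1-\varepsilon)+\varepsilon(1-\norm{U})^2>0 .
\]
This gives strict ellipticity.

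\emph{Concavity.} Since $\ddot F_\varepsilon=-\varepsilon\ddot q$, \eqref{eq:quotient-second} gives $\ddot F_\varepsilon\leq0$ on the convex cone $\Gamma$. The cone $\Gamma$ is convex because $\norm{A^\circ}-\tr A$ is a convex function.

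\emph{Non-convexity.} Take $A=\Id$ and $C$ trace-free and nonzero. Then the second derivative equals $-2\varepsilon\norm{C}^2/n<0$.

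\emph{Cone gap.} This is the only step with content. Let $\lambda=\lambda_{\min}(A)$ with unit eigenvector $e$. Then $\lambda=h/n+A^\circ(e,e)$. The trace-free part has eigenvalues summing to zero, so its least eigenvalue $\mu$ satisfies the sharp bound $\mu\geq-\sqrt{(n-1)/n}\,\norm{A^\circ}$. This bound is attained by $\mathrm{diag}(-(n-1),1,\dots,1)$ normalized. Hence
\[
 \lambda\geq h\Bigl(\frac1n-\sqrt{\tfrac{n-1}{n}}\,\norm{U}\Bigr)>h\Bigl(\frac1n-\sqrt{\tfrac{n-1}{n}}\Bigr)=-\beta_n h .
\]
The last inequality is strict because $\norm{U}<1$. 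It remains to check that $aF_\varepsilon(A)>a(1-\varepsilon)h\geq\beta_n h$, and this holds because $\varepsilon<1-\beta_n/a$. Adding the two bounds gives $\lambda+aF_\varepsilon>0$.

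The only point needing care is the sharp eigenvalue bound for a trace-free matrix. It follows from Cauchy--Schwarz applied to the remaining $n-1$ eigenvalues: they sum to $-\mu$, so $\norm{A^\circ}^2\geq\mu^2+\mu^2/(n-1)$. The constant $C_{n,a}$ is not needed in this lemma.
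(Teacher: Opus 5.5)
Your proof is correct and follows essentially the same route as the paper: the same normalization $F_\varepsilon=h(1-\varepsilon\norm{U}^2)$, the same use of Lemma~\ref{lem:quadratic-quotient} for concavity and non-convexity, and the same trace-free Cauchy--Schwarz eigenvalue bound for the cone gap. The only difference is in the ellipticity step. You bound $U(e,e)$ by $\norm{U}$ and obtain $\dot F_\varepsilon^{ij}e_ie_j\geq1-\varepsilon$. The paper instead applies the trace-free bound to the eigenvalues of $U$ and obtains the sharper $\dot q(A)\leq\frac{n-1}{n}\Id$, which it reuses in \eqref{eq:DgR-bound} of Lemma~\ref{lem:global-shift} to produce the exact constant $C_{n,a}$; your cruder estimate suffices for this lemma but would not yield that constant.
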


\begin{proof}
The set $\Gamma$ is open by continuity of trace and norm, and it is
closed under multiplication by positive scalars.  If $A,B\in\Gamma$ and
$0<\theta<1$, then $\tr((1-\theta)A+\theta B)>0$ and
\begin{align*}
 \norm{((1-\theta)A+\theta B)^\circ}
 &=\norm{(1-\theta)A^\circ+\theta B^\circ}\\
 &\leq(1-\theta)\norm{A^\circ}+\theta\norm{B^\circ}\\
 &<(1-\theta)\tr A+\theta\tr B
 =\tr((1-\theta)A+\theta B).
\end{align*}
The endpoint cases are immediate, so $\Gamma$ is convex.
For $A>0$, put $h_A=\tr A=\sum_i\lambda_i>0$.  Since
\[
 h_A^2-\norm A^2=2\sum_{i<j}\lambda_i\lambda_j>0,
 \qquad \norm{A^\circ}^2=\norm A^2-\frac{h_A^2}{n}<h_A^2,
\]
we have $A\in\Gamma$.
For a general $A\in\Gamma$, put $h_A=\tr A$ and $U_A=A^{\circ}/h_A$.
Then $\tr U_A=0$, $\norm{U_A}<1$, and
\begin{equation}\label{eq:F-U}
 F_{\varepsilon}(A)=h_A(1-\varepsilon\norm{U_A}^2)>0.
\end{equation}
Since $F_\varepsilon(A)=\tr A-\varepsilon q(A)$,
Lemma~\ref{lem:quadratic-quotient} gives
\[
 \dot F_\varepsilon^{ij}(A)=\delta_{ij}-\varepsilon\dot q^{ij}(A),
 \qquad \ddot F_\varepsilon^{ij,kl}(A)
 =-\varepsilon\ddot q^{ij,kl}(A).
\]
In an eigenbasis of $A$, if $u_i$ are the eigenvalues of $U_A$, then
\[
 \dot f_i=1-\varepsilon(2u_i-\norm{U_A}^2).
\]
For each fixed $i$, the trace-free condition and Cauchy--Schwarz give
\[
 u_i^2=\left(\sum_{j\ne i}u_j\right)^2
 \leq(n-1)\sum_{j\ne i}u_j^2.
\]
Dividing by $n-1$ and adding $u_i^2$ yields
\[
 \norm{U_A}^2=u_i^2+\sum_{j\ne i}u_j^2
 \geq\frac{n}{n-1}u_i^2.
\]
Completing the square, we obtain
\[
 2u_i-\norm{U_A}^2
 \leq 2u_i-\frac{n}{n-1}u_i^2
 =\frac{n-1}{n}
 -\frac{n}{n-1}\left(u_i-\frac{n-1}{n}\right)^2
 \leq\frac{n-1}{n}.
\]
Hence $\dot f_i\geq1-\varepsilon(n-1)/n>0$, since
\eqref{eq:epsilon-range} implies $\varepsilon<1$.
This proves strict ellipticity.  Equation~\eqref{eq:quotient-second} gives $\ddot F_\varepsilon\leq0$,
so $F_\varepsilon$ is concave.  At a scalar matrix $A=c\Id$ with $c>0$
and for any nonzero trace-free $C$,
we have $U_A=0$ and
$\ddot F_\varepsilon^{ij,kl}(A)C_{ij}C_{kl}
=-2\varepsilon\norm C^2/h_A<0$.  Hence the speed is not convex.

The same trace-free estimate gives
$|u_i|\leq\sqrt{(n-1)/n}\norm{U_A}<\sqrt{(n-1)/n}$ for every $i$.
Consequently
\[
 \frac{\lambda_{\min}(A)}{h_A}>\frac1n-\sqrt{\frac{n-1}{n}}=-\beta_n.
\]
Together with \eqref{eq:F-U}, this gives
\[
 \frac{\lambda_{\min}(A)+aF_{\varepsilon}(A)}{h_A}
 >-\beta_n+a(1-\varepsilon)>0,
\]
where the last inequality is exactly the first restriction in
\eqref{eq:epsilon-range}.
\end{proof}

\begin{lemma}[global finite-shift estimate]
\label{lem:global-shift}
For any $A,B\in\Gamma$ and any
$z\geq-(a+1) F_{\varepsilon}(A)$ such that
\[
 P=A-z\Id>0,\qquad Q=B-z\Id>0,
\]
one has
\begin{equation}\label{eq:global-E-bound}
 \mathcal E_z(A,B)\geq
 \bigl(1-\varepsilon C_{n,a}\bigr)
 \tr\bigl((B-A)Q^{-1}(B-A)\bigr)\geq0.
\end{equation}
\end{lemma}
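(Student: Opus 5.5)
The plan is to split $\mathcal E_z$ via the Bregman form \eqref{eq:Ez-bregman} into the main term $\tr(WQ^{-1}W)$ and two $\varepsilon$-error terms, and to bound each error by a constant multiple of $\tr(WQ^{-1}W)$. Throughout, $W=B-A$, $h_A=\tr A$, $h_B=\tr B$, and $U_A,U_B$ are as in Lemma~\ref{lem:quadratic-quotient}.

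Since $F_\varepsilon=\tr-\varepsilon q$ and the trace is linear, \eqref{eq:perspective-bregman} gives
\[
F_\varepsilon(B)-F_\varepsilon(A)-\dot F_\varepsilon^{ij}(A)W_{ij}=-\varepsilon h_B\norm{U_B-U_A}^2 .
\]
Also $\dot F_\varepsilon^{ij}(A)=\delta_{ij}-\varepsilon\dot q^{ij}(A)$. Putting $M=WQ^{-1}W\geq0$, we therefore obtain
\[
\mathcal E_z(A,B)=\tr M-\varepsilon\bigl[h_B\norm{U_B-U_A}^2+\dot q^{ij}(A)M_{ij}\bigr].
\]
For the second bracket term, we work in an eigenbasis of $A$. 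The proof of Lemma~\ref{lem:speed-properties} shows that the eigenvalues $2u_i-\norm{U_A}^2$ of $\dot q(A)$ are at most $(n-1)/n$. Since $M\geq0$, this gives $\dot q^{ij}(A)M_{ij}\leq\frac{n-1}{n}\tr M$, which accounts for the summand $(n-1)/n$ in $C_{n,a}$.

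It remains to prove
\[
h_B\norm{U_B-U_A}^2\leq 4n\bigl(1+2n(a+1)\bigr)\tr M,
\]
and this is the main step. First, $M\geq\norm{W}^2$-type control: we have $\tr M\geq\norm W^2/\lambda_{\max}(Q)$, and
\[
\lambda_{\max}(Q)\leq\tr Q=h_B-nz\leq h_B+n(a+1)F_\varepsilon(A)\leq h_B+n(a+1)h_A .
\]
The last two inequalities use $z\geq-(a+1)F_\varepsilon(A)$ and $F_\varepsilon\leq\tr$. Next, since $B^\circ=A^\circ+W^\circ$, we have the identity
\[
U_B-U_A=\frac{W^\circ-(\tr W)U_A}{h_B},
\]
and $\norm{U_A}<1$ on $\Gamma$.

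We then split into two cases.

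\emph{Case $h_A\leq 2h_B$.} Using $\norm{W^\circ}\leq\norm W$ and $\norm{\tr W}\leq\sqrt n\norm W$, we get $h_B\norm{U_B-U_A}^2\leq c_n\norm W^2/h_B$ with $c_n$ of order $n$. On the other hand,
\[
\tr M\geq\frac{\norm W^2}{h_B+n(a+1)h_A}\geq\frac{\norm W^2}{h_B\bigl(1+2n(a+1)\bigr)} .
\]
Comparing the two bounds gives a constant of the form $c_n(1+2n(a+1))$.

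\emph{Case $h_A>2h_B$.} Here we use the crude bound $h_B\norm{U_B-U_A}^2\leq 4h_B<2h_A$. In the other direction, $\norm W\geq\norm{\tr W}/\sqrt n>h_A/(2\sqrt n)$, so
\[
\tr M\geq\frac{h_A^2}{4n\bigl(h_B+n(a+1)h_A\bigr)}\geq\frac{h_A}{4n\bigl(1+n(a+1)\bigr)} .
\]
This again gives a bound of the form $4n(1+2n(a+1))\tr M$, possibly after adjusting the case threshold.

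I expect the main obstacle to be the constant bookkeeping, namely making both cases fit under the single constant $4n(1+2n(a+1))$. In the first case I would first try the sharper expansion
\[
\norm{W^\circ-(\tr W)U_A}^2\leq 2\norm{W^\circ}^2+2(\tr W)^2\leq 2n\norm W^2 .
\]
If the first case's constant still comes out too large, I would tune the threshold $2$ in the case split. Combining the two error bounds gives $\mathcal E_z\geq(1-\varepsilon C_{n,a})\tr M$, and the factor $1-\varepsilon C_{n,a}$ is positive by \eqref{eq:epsilon-range}, which yields \eqref{eq:global-E-bound}.
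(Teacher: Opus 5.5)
Your proposal is the paper's argument step for step. It uses the same decomposition $\mathcal E_z=\tr M-\varepsilon\bigl[h_B\norm{U_B-U_A}^2+\dot q^{ij}(A)M_{ij}\bigr]$, the same one-sided bound $\dot q(A)\leq\frac{n-1}{n}\Id$, the same estimate $\tr M\geq\norm W^2/\lambda_{\max}(Q)$ with $\lambda_{\max}(Q)\leq h_B+n(a+1)h_A$, and the same split at $h_A=2h_B$. Your first case closes at $2n(1+2n(a+1))$ with the sharper expansion.

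The one slip is in the case $h_A>2h_B$, where the stated $C_{n,a}$ is not reached as written. Bounding $h_B+n(a+1)h_A$ by $h_A(1+n(a+1))$ gives only $2h_A\leq 8n(1+n(a+1))\tr M$, which exceeds $4n(1+2n(a+1))\tr M$ by $4n\tr M$. Use $h_B<h_A/2$ in that denominator instead, as the paper does. Then $\lambda_{\max}(Q)<\frac{h_A}{2}\bigl(1+2n(a+1)\bigr)$, so $\tr M>h_A/\bigl(2n(1+2n(a+1))\bigr)$ and $h_B\norm{U_B-U_A}^2<2h_A<4n(1+2n(a+1))\tr M$ exactly. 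Your fallback of moving the threshold also works: with threshold $3$, even your weaker bound closes both cases under $4n(1+2n(a+1))$.
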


\begin{proof}
Write
\[
 h_A=\tr A,\quad h_B=\tr B,\quad
 U_A=\frac{A^{\circ}}{h_A},\quad U_B=\frac{B^{\circ}}{h_B},\quad
 W=B-A,\quad R=WQ^{-1}W.
\]
Since $A,B\in\Gamma$, we have $h_A,h_B>0$,
$\tr U_A=\tr U_B=0$, and $\norm{U_A},\norm{U_B}<1$.
Moreover, $R\geq0$, because $W$ is symmetric and
\[
 \inner{Rv}{v}=\inner{Q^{-1}Wv}{Wv}\geq0
 \qquad(v\in\R^n).
\]

We first expand the shifted comparison expression.  Since
$F_\varepsilon=\tr-\varepsilon q$ and $\tr W=h_B-h_A$,
the linear trace terms cancel, and \eqref{eq:perspective-bregman} gives
\begin{align*}
 &F_\varepsilon(B)-F_\varepsilon(A)
       -\dot F_\varepsilon^{ij}(A)W_{ij}\\
 &\quad=-\varepsilon\bigl(q(B)-q(A)-\dot q^{ij}(A)W_{ij}\bigr)
 =-\varepsilon h_B\norm{U_B-U_A}^2.
\end{align*}
Also,
\[
 \dot F_\varepsilon^{ij}(A)R_{ij}
 =\tr R-\varepsilon\dot q^{ij}(A)R_{ij}.
\]
Substituting these two identities into \eqref{eq:Ez-bregman} yields
\begin{equation}\label{eq:E-explicit-decomp}
 \mathcal E_z(A,B)=
 \tr R-\varepsilon\left(h_B\norm{U_B-U_A}^2+\dot q^{ij}(A)R_{ij}\right).
\end{equation}
The estimate in the proof of Lemma~\ref{lem:speed-properties} gives
\[
 \dot q(A)=2U_A-\norm{U_A}^2\Id\leq\frac{n-1}{n}\Id.
\]
Since $R\geq0$, contraction yields the one-sided bound
\begin{equation}\label{eq:DgR-bound}
 \dot q^{ij}(A)R_{ij}\leq\frac{n-1}{n}\tr R.
\end{equation}
No commutation between $\dot q(A)$ and $R$ is required.

We next bound $h_B\norm{U_B-U_A}^2$ by a multiple of $\tr R$.
The shift assumption and $F_\varepsilon(A)\leq h_A$ imply
$-nz\leq n(a+1)F_\varepsilon(A)\leq n(a+1)h_A$.
Since every eigenvalue of $Q$ is positive,
\begin{align}
 \lambda_{\max}(Q)&\leq\tr Q=h_B-nz
 \leq h_B+n(a+1) h_A,\label{eq:lambdaQ}\\
 \tr R&\geq\frac{\norm W^2}{\lambda_{\max}(Q)}.
 \label{eq:R-lower}
\end{align}
To justify \eqref{eq:R-lower}, the spectral theorem gives
$Q^{-1}\geq\lambda_{\max}(Q)^{-1}\Id$.
For an orthonormal basis $\{e_i\}$, it follows that
\[
 \tr R=\sum_i\inner{Q^{-1}We_i}{We_i}
 \geq\frac1{\lambda_{\max}(Q)}\sum_i\norm{We_i}^2
 =\frac{\norm W^2}{\lambda_{\max}(Q)}.
\]
This uses no commutation between $W$ and $Q$.

Suppose first that $h_B\geq h_A/2$.  Since
$W^\circ=h_BU_B-h_AU_A$ and $\tr W=h_B-h_A$,
\[
 h_B(U_B-U_A)=W^\circ-(\tr W)U_A.
\]
Using $\norm{S+T}^2\leq2\norm S^2+2\norm T^2$ and
$\norm{U_A}<1$, we obtain
\begin{align*}
 h_B^2\norm{U_B-U_A}^2
 &\leq2\norm{W^\circ}^2+2(\tr W)^2\norm{U_A}^2\\
 &\leq2\bigl(\norm{W^\circ}^2+(\tr W)^2\bigr)
 \leq2n\norm W^2.
\end{align*}
The last inequality follows from the orthogonal trace decomposition
\[
 \norm W^2=\norm{W^\circ}^2+\frac{(\tr W)^2}{n},
 \qquad
 \norm{W^\circ}^2+(\tr W)^2
 =n\norm W^2-(n-1)\norm{W^\circ}^2\leq n\norm W^2.
\]
Dividing by $h_B$ and using \eqref{eq:lambdaQ}--\eqref{eq:R-lower},
together with $h_A/h_B\leq2$, gives
\begin{align*}
 h_B\norm{U_B-U_A}^2
 &\leq\frac{2n}{h_B}\norm W^2
 \leq\frac{2n\lambda_{\max}(Q)}{h_B}\tr R\\
 &\leq2n\left(1+n(a+1)\frac{h_A}{h_B}\right)\tr R
 \leq2n(1+2n(a+1))\tr R.
\end{align*}

Suppose now that $h_B<h_A/2$.  The triangle inequality gives
$\norm{U_B-U_A}\leq\norm{U_B}+\norm{U_A}<2$, so
\[
 h_B\norm{U_B-U_A}^2<4h_B<2h_A,
\]
whereas the trace decomposition above gives
\[
 \norm W^2\geq\frac{(\tr W)^2}{n}
 =\frac{(h_A-h_B)^2}{n}>\frac{h_A^2}{4n}.
\]
At the same time, \eqref{eq:lambdaQ} implies
\[
 \lambda_{\max}(Q)
 \leq h_B+n(a+1)h_A
 <\frac{h_A}{2}\bigl(1+2n(a+1)\bigr).
\]
Combining the last two bounds with \eqref{eq:R-lower} yields
\[
 \tr R\geq\frac{\norm W^2}{\lambda_{\max}(Q)}
 >\frac{h_A^2/(4n)}{h_A(1+2n(a+1))/2}
 =\frac{h_A}{2n(1+2n(a+1))}.
\]
Multiplying by $4n(1+2n(a+1))$ shows that
$2h_A<4n(1+2n(a+1))\tr R$, which bounds
$h_B\norm{U_B-U_A}^2$ as required.
Thus in both cases
\begin{equation}\label{eq:perspective-bound}
 h_B\norm{U_B-U_A}^2
 \leq4n(1+2n(a+1))\tr R.
\end{equation}
Finally, \eqref{eq:DgR-bound} and \eqref{eq:perspective-bound} give
\[
 h_B\norm{U_B-U_A}^2+\dot q^{ij}(A)R_{ij}
 \leq\left[4n(1+2n(a+1))+\frac{n-1}{n}\right]\tr R
 =C_{n,a}\tr R.
\]
Since $\varepsilon>0$, substitution into \eqref{eq:E-explicit-decomp}
reverses the inequality for the subtracted term and yields
\[
 \mathcal E_z(A,B)\geq(1-\varepsilon C_{n,a})\tr R\geq0.
\]
The final inequality follows from
$\varepsilon<C_{n,a}^{-1}$ in \eqref{eq:epsilon-range} and $R\geq0$.
Recalling $R=WQ^{-1}W$ and $W=B-A$ proves \eqref{eq:global-E-bound}.
\end{proof}

\begin{lemma}[Preservation of the admissible cone]
\label{lem:cone-preservation}
Let $0<\varepsilon<1$, and let $X$ be a smooth closed solution with
speed \eqref{eq:F-explicit} which is initially admissible for the cone
\eqref{eq:Gamma-explicit}.  Then it remains $\Gamma$-admissible
throughout its smooth existence interval.  More precisely, put
\[
 \theta_0=\max_M\frac{\norm{A^\circ(\cdot,0)}^2}{H(\cdot,0)^2}<1,
 \qquad m_0=\min_M F_\varepsilon(\cdot,0)>0.
\]
Then
\begin{equation}\label{eq:preserved-cone-margin}
 \frac{\norm{A^\circ}^2}{H^2}\leq\theta_0,
 \qquad H\geq F_\varepsilon\geq m_0.
\end{equation}
\end{lemma}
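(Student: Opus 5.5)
The plan is a continuity/maximum-principle argument on the maximal subinterval $[0,T')$ on which the solution stays $\Gamma$-admissible. On that interval the speed is smooth and strictly elliptic by Lemma~\ref{lem:speed-properties}. The aim is to show that the two quantities in \eqref{eq:preserved-cone-margin} obey parabolic inequalities with no bad reaction term. Then the closed conditions $\norm{A^\circ}^2\leq\theta_0H^2$ and $F_\varepsilon\geq m_0$ hold up to $T'$. Since $\theta_0<1$ and $m_0>0$, these strict margins keep $A$ in a compact subset of $\Gamma$ modulo scaling. Hence $T'$ equals the full smooth existence time.

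\emph{The speed bound.} Lemma~\ref{lem:basic-evolution} gives $(\partial_t-\Delta_F)F_\varepsilon=F_\varepsilon|A|_F^2$. Here $|A|_F^2\geq0$ because $\dot F>0$, and $F_\varepsilon>0$ on $\Gamma$. So the spatial minimum of $F_\varepsilon$ is nondecreasing, and $F_\varepsilon\geq m_0$. The inequality $H\geq F_\varepsilon$ is immediate from \eqref{eq:F-U}.

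\emph{The pinching ratio.} I would work with $G=\norm{U}^2$, where $U=A^\circ/H$ as in Lemma~\ref{lem:quadratic-quotient}. First I would derive the evolution of the Weingarten map in the standard form
\[
 (\partial_t-\Delta_F)A=\ddot F(\nabla A,\nabla A)+|A|_F^2A.
\]
This follows from Lemma~\ref{lem:basic-evolution}, the chain rule \eqref{eq:curvature-chain-rule}, and commuting derivatives with Codazzi and Gauss. Since $G$ is homogeneous of degree zero, Euler's identity gives $\dot G(A)=0$, so the reaction term $|A|_F^2A$ drops out. Two gradient terms remain. The first is $\dot G^{ij}(\ddot F(\nabla_iA,\nabla_jA))$. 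By \eqref{eq:quotient-second}, $\ddot F_\varepsilon(\nabla_iA,\nabla_jA)=-2\varepsilon H\inner{\nabla_iU}{\nabla_jU}$. This uses the identity $(\nabla_iA)^\circ-(\nabla_iH)U=H\nabla_iU$. The second is $-\dot F^{kl}\ddot G(\nabla_kA,\nabla_lA)$. Writing $A=H(\Id/n+U)$, I expect it to split as $-2\dot F^{kl}\inner{\nabla_kU}{\nabla_lU}$ plus terms proportional to $\nabla H\cdot\nabla G$, which vanish at a spatial maximum. In a principal frame, insert $\dot f_i=1-\varepsilon(2u_i-G)$ from the proof of Lemma~\ref{lem:speed-properties}, together with $\dot G(C)=\frac2H(\inner{U}{C^\circ}-G\tr C)$.

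The hard step is to show that the total gradient contribution at a maximum point is $\leq0$. The first term contributes roughly $-4\varepsilon\sum_iu_i\norm{\nabla_iU}^2+4\varepsilon G\sum_i\norm{\nabla_iU}^2$. The second contributes $-2\sum_i\dot f_i\norm{\nabla_iU}^2$. A crude bound using only $\norm{U}<1$ would force $\varepsilon$ to be small, which the lemma does not assume. So I would look for the exact cancellation. The $u_i$-terms coming from $\dot f_i$ should cancel those coming from $\dot G$. This would leave something like $-2(1-\varepsilon G)\sum_i\norm{\nabla_iU}^2\leq0$, which is valid for all $\varepsilon<1$. Verifying the signs and coefficients in this cancellation carefully is the main obstacle. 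If it fails, I would fall back on Andrews' tensor maximum principle for the convex invariant cone $\{\norm{A^\circ}^2\leq\theta H^2\}$, using its concave defining function.

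Given the cancellation, the maximum principle yields $\max_MG(\cdot,t)\leq\theta_0$. Together with $H\geq F_\varepsilon\geq m_0>0$, this is \eqref{eq:preserved-cone-margin}. The continuity argument then closes as described in the first paragraph.
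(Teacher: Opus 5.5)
Your proposal is correct. The cancellation you flag as the main obstacle holds exactly, so the fallback to the tensor maximum principle is unnecessary. The route differs from the paper's, though.

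You can see the cancellation without choosing a frame. Differentiating $F_\varepsilon=H(1-\varepsilon G)$ gives $\dot F_\varepsilon^{pq}=(1-\varepsilon G)\delta^{pq}-\varepsilon H\dot G^{pq}$. You also have $\ddot F_\varepsilon(\nabla_pA,\nabla_qA)=-2\varepsilon H\inner{\nabla_pU}{\nabla_qU}$ and $\ddot G(\nabla_pA,\nabla_qA)=2\inner{\nabla_pU}{\nabla_qU}-H^{-1}(\nabla_pH\nabla_qG+\nabla_qH\nabla_pG)$. Combining these, the $\dot G$-parts cancel and
\[
 \dot G^{pq}\ddot F_\varepsilon(\nabla_pA,\nabla_qA)-\dot F_\varepsilon^{pq}\ddot G(\nabla_pA,\nabla_qA)
 =-2(1-\varepsilon G)\norm{\nabla U}^2+\frac2H\dot F_\varepsilon^{pq}\nabla_pH\nabla_qG .
\]
This is exactly \eqref{eq:normalized-pinching-evolution}, since $1-\varepsilon G=F_\varepsilon/H$. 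Your principal-frame computation with $\dot f_i=1-\varepsilon(2u_i-G)$ and $\dot G^{ii}=\frac2H(u_i-G)$ gives the same result.

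The paper never writes the tensor evolution of $A$ or the Hessian of $G$. It evolves only two scalars:
\begin{itemize}
\item $H$, via Simons' identity, where the $\ddot F_\varepsilon$ term is traced with the metric and is directly the square $-2\varepsilon H\norm{\nabla U}^2$;
\item $F_\varepsilon$, from Lemma~\ref{lem:basic-evolution}.
\end{itemize}
It then applies the quotient rule to $F_\varepsilon/H=1-\varepsilon\norm{U}^2$, so no cancellation needs checking. Your computation is the standard, more mechanical pinching argument and would run for any zero-homogeneous pinching quantity. The paper's is shorter because it uses the special fact that the pinching quantity is an affine function of $F_\varepsilon/H$. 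Your speed bound and your continuity/first-exit argument match the paper's.
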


\begin{proof}
The ellipticity calculation in Lemma~\ref{lem:speed-properties} uses
only $0<\varepsilon<1$.  Work first on a compact admissible time
interval.  Taking the trace in \eqref{eq:basic-evolution} gives
$\partial_tH=\Delta F_\varepsilon+F_\varepsilon\norm A^2$.
The chain rule and Simons' identity
\[
 \nabla^p\nabla_p h_{ij}=\nabla_i\nabla_jH
 +Hh_i{}^kh_{kj}-\norm A^2h_{ij}
\]
therefore give
\begin{align*}
 (\partial_t-\Delta_{F_\varepsilon})H
 ={}&H|A|_{F_\varepsilon}^2
 +\ddot F_\varepsilon^{ij,kl}\nabla_p h_{ij}\nabla^p h_{kl}\\
 &-\norm A^2\dot F_\varepsilon^{ij}h_{ij}
   +F_\varepsilon\norm A^2.
\end{align*}
Euler homogeneity cancels the last two terms, leaving
\begin{equation}\label{eq:H-evolution}
 (\partial_t-\Delta_{F_\varepsilon})H
 =H|A|_{F_\varepsilon}^2
 +\ddot F_\varepsilon^{ij,kl}\nabla_p h_{ij}\nabla^p h_{kl}.
\end{equation}
Set $U=A^\circ/H$, so that
\[
 \frac{F_\varepsilon}{H}=1-\varepsilon\norm U^2.
\]
Since $(\nabla_pA)^\circ-(\nabla_pH)U=H\nabla_pU$,
equation~\eqref{eq:quotient-second} turns \eqref{eq:H-evolution} into
\[
 (\partial_t-\Delta_{F_\varepsilon})H
 =H|A|_{F_\varepsilon}^2-2\varepsilon H\norm{\nabla U}^2,
\]
where $\norm{\nabla U}^2=\sum_p\norm{\nabla_pU}^2$ in an
orthonormal frame.  Combining this with \eqref{eq:F-evolution}, the
quotient rule yields
\[
 (\partial_t-\Delta_{F_\varepsilon})\frac{F_\varepsilon}{H}
 =\frac2H\dot F_\varepsilon^{ij}(\nabla_iH)
       \nabla_j\frac{F_\varepsilon}{H}
 +2\varepsilon\frac{F_\varepsilon}{H}\norm{\nabla U}^2.
\]
Therefore
\begin{equation}\label{eq:normalized-pinching-evolution}
 \left(\partial_t-\Delta_{F_\varepsilon}
 -\frac2H\dot F_\varepsilon^{ij}(\nabla_iH)\nabla_j\right)\norm U^2
 =-2\frac{F_\varepsilon}{H}\norm{\nabla U}^2\leq0.
\end{equation}
The maximum principle gives $\norm U^2\leq\theta_0$.
Equation~\eqref{eq:F-evolution}
also gives $F_\varepsilon\geq m_0$, since $|A|_{F_\varepsilon}^2\geq0$.
As $F_\varepsilon\leq H$, this proves \eqref{eq:preserved-cone-margin}
on every compact admissible time interval.

These bounds are independent of the endpoint of that interval.  At a
putative smooth first exit from $\Gamma$, continuity would thus give
$H\geq m_0>0$ and $\norm{A^\circ}^2/H^2\leq\theta_0<1$,
contradicting exit.  This proves preservation of admissibility; it does
not assert continuation through a curvature singularity.
\end{proof}

\begin{proof}[Proof of Theorem~\ref{thm:explicit}]
Lemma~\ref{lem:cone-preservation} preserves $\Gamma$-admissibility
throughout the smooth embedded existence interval.  Lemmas
\ref{lem:speed-properties} and \ref{lem:global-shift} verify the
remaining hypotheses of Theorem~\ref{thm:general} with $b=a+1$.
Applying that theorem completes the proof.
\end{proof}

The exterior estimate is now established, and the same speed is concave
on its preserved cone.  We can therefore combine it with the known
interior estimate to prove the two-sided statement.

\begin{proof}[Proof of Corollary~\ref{cor:two-sided}]
The diagonal compactification in Section~\ref{sec:preliminaries}
makes $k(\cdot,\cdot,0)$ bounded.  Since the initial speed has a positive
minimum, $C_{\mathrm{in}}$ is finite.  The diagonal values also give
$\overline{k}\geq\kappa_n\geq H/n>0$, so $C_{\mathrm{in}}>0$.

By Lemmas~\ref{lem:speed-properties} and \ref{lem:cone-preservation},
$F_\varepsilon$ is concave and positive on the convex cone $\Gamma$,
and the flow remains admissible.  The interior estimate of Andrews,
Langford and McCoy~\cite[Corollary~3]{ALM13} therefore gives
$\overline{k}\leq C_{\mathrm{in}}F_\varepsilon$.
Together with Theorem~\ref{thm:explicit}, this proves both curvature
bounds.  Their tangent-ball interpretation
\cite[Proposition~4]{ALM13} gives an interior ball of radius
$(C_{\mathrm{in}}F_\varepsilon)^{-1}$ and an exterior ball of radius
$(aF_\varepsilon)^{-1}$ at each point.  Both radii are at least
$\alpha/F_\varepsilon$, as claimed.
\end{proof}

We finish by recording the strictness of the structural comparison
and by giving closed nonconvex initial hypersurfaces to which the
estimates apply.

\begin{remark}[Strictness at finite shift depth]\label{rem:strictness}
The implication in Proposition~\ref{prop:hierarchy}\textup{(i)} is strict
at every finite shift depth.  Indeed, fix $n\geq2$ and $b\geq0$, choose
$a>\max\{\beta_n,b-1\}$, and then choose $\varepsilon$ satisfying
\eqref{eq:epsilon-range}; this range is nonempty because $a>\beta_n$.
On the convex cone \eqref{eq:Gamma-explicit}, which contains the positive
definite cone, the speed \eqref{eq:F-explicit} is smooth, positive,
homogeneous of degree one, strictly elliptic and non-convex by
Lemma~\ref{lem:speed-properties}.  Lemma~\ref{lem:global-shift} shows that
it satisfies $\mathcal S_{a+1}$.  Since $a+1>b$, every shift allowed by
$\mathcal S_b$ is also allowed by $\mathcal S_{a+1}$:
\[
 z\geq-bF_\varepsilon(A)\geq-(a+1)F_\varepsilon(A).
\]
Thus this non-convex speed satisfies $\mathcal S_b$, as required.
\end{remark}

\begin{remark}[Nonconvex initial hypersurfaces]\label{rem:nonconvex-data}
The cone \eqref{eq:Gamma-explicit} includes negative
principal curvatures.  For example, it contains
$\operatorname{diag}(-\delta,1,\ldots,1)$ for all sufficiently small
$\delta>0$.  It also occurs on closed embedded non-convex hypersurfaces.
For $R>r>0$, consider the thin tube
\[
 X(\theta,\omega)=
 \bigl((R+r\omega_1)\cos\theta,
 (R+r\omega_1)\sin\theta,
 r\omega_2,\ldots,r\omega_n\bigr),
\]
where $\theta\in S^1$ and $\omega\in S^{n-1}$.  With the outward tube
normal, its principal curvatures are
\[
 \frac1r\quad\text{with multiplicity }n-1,
 \qquad
 \frac{\omega_1}{R+r\omega_1}.
\]
The last curvature is negative on the inner side.  As $R/r\to\infty$,
uniformly in $(\theta,\omega)$,
\[
 \frac{\norm{A^{\circ}}}{H}\longrightarrow
 \frac1{\sqrt{n(n-1)}}<1.
\]
Hence every sufficiently thin such tube is $\Gamma$-admissible and
non-convex.  To verify the initial exterior bound, fix one such tube and set
\[
 a_0=\max\left\{0,-\inf_M\frac{\ka(\cdot,0)}{H(\cdot,0)}\right\}<\infty,
\]
where finiteness follows from the diagonal compactification
and $H>0$ on the compact admissible tube.  Choose
\(a>\max\{\beta_n,2a_0\}\), and then choose $\varepsilon$ according
to \eqref{eq:epsilon-range}.  The bound
\(\varepsilon<C_{n,a}^{-1}<1/2\) gives
\(F_\varepsilon>H/2\), and hence
\[
 \ka(\cdot,0)+aF_\varepsilon(\cdot,0)
 \geq -a_0H+\frac a2H>0.
\]
Thus this tube satisfies the initial hypotheses of
Theorem~\ref{thm:explicit}.  To apply
\cite[Theorem~18.18]{ACGL}, choose a smooth symmetric cutoff equal to
one on a neighbourhood of the compact initial principal-curvature image
and supported compactly in the eigenvalue cone corresponding to $\Gamma$.
Multiplying the speed by this cutoff and extending by zero gives a smooth
symmetric function on $\R^n$ which remains strictly monotone on that
neighbourhood.  The cited theorem gives a smooth short-time solution.
After shortening the time interval, its principal curvatures stay where
the cutoff equals one, so it solves the original flow; embeddedness
also persists by smooth dependence on time.  Lemma~\ref{lem:cone-preservation}
then preserves admissibility throughout the smooth embedded existence
interval, and Corollary~\ref{cor:two-sided} applies.

\end{remark}

\begin{remark}[Relation to mean curvature flow, and sharpness of the
parameter range]\label{rem:mcf-sharpness}
On $\Gamma$ the speed satisfies
$F_\varepsilon=\tr+O(\varepsilon)$, so the flow \eqref{eq:flow} with
speed $F_\varepsilon$ is a small perturbation of mean curvature flow.
Theorem~\ref{thm:explicit}, however, does not follow from the
noncollapsing theory of mean curvature flow.  The structural hypotheses
of the two-point maximum principle are not open conditions:
$F_\varepsilon$ is non-convex for every $\varepsilon>0$
(Lemma~\ref{lem:speed-properties}), and ordinary inverse-concavity
controls no negative shift at all
(Proposition~\ref{prop:hierarchy}(iii)).  Nor is there a comparison
principle transporting the two-point estimate from one speed to
another: the exterior bound must be verified as a pointwise matrix
comparison on the whole admissible cone, which is exactly the condition
$\mathcal S_b$, and Lemma~\ref{lem:global-shift} is its first
verification for a speed that is not convex.

The first restriction in \eqref{eq:epsilon-range} is sharp for the
curvature bound \eqref{eq:explicit-cone-gap}.  Indeed, for
$A_t=\frac1n\Id+tD$ with
$D=\operatorname{diag}(-(n-1),1,\ldots,1)/\sqrt{n(n-1)}$, one has
$A_t\in\Gamma$ for $0\leq t<1$ and
\[
 \lambda_{\min}(A_t)+aF_\varepsilon(A_t)
 =\frac1n-t\sqrt{\frac{n-1}{n}}+a(1-\varepsilon t^2)
 \longrightarrow-\beta_n+a(1-\varepsilon)
 \qquad\text{as }t\uparrow1,
\]
so \eqref{eq:explicit-cone-gap} fails on $\Gamma$ once
$a(1-\varepsilon)\leq\beta_n$.  Whether the second restriction
$\varepsilon<C_{n,a}^{-1}$, coming from the global estimate
\eqref{eq:global-E-bound}, can be relaxed is open; more generally it
would be interesting to know which other concave speeds satisfy
$\mathcal S_b$ for some $b>0$ on a cone allowing negative principal
curvatures.
\end{remark}

%%%%%%%%%%%%%%%%%
\section*{Declarations}

%%%%%%%%%%%%%%%%%%%
%%%%%%%%%%%%%%%%%%%
{\bf AI usage.} 
The authors used AI models, in particular GPT-5.6 Sol and Kimi K3, to help construct the example in Theorem~\ref{thm:explicit} and refine the exposition. The authors take full responsibility for the content of the manuscript.

%%%%%%%%%%%%%%%%%%%%%
{\bf Acknowledgements.}
W. Sheng was partially supported by National Key R$\&$D Program of China (No. 2022YFA1005500) and Natural Science Foundation of China under Grant No. 12571063.

%%%%%%%%%%%%%%%%%%%%%%%
%%%%%%%%%%%%%%%%%%%%%%%
\textbf{Conflict of interest.} The authors declare that they have no
conflict of interest.

\textbf{Data availability.} No datasets were generated or analysed
during the current study.

%%%%%%%%%%%%%%%%%%%%%%%%%%%

\end{document}